\documentclass[11pt,reqno]{amsart}
\usepackage[T1]{fontenc}
\usepackage{fullpage,a4wide}
\usepackage{youngtab}
\usepackage{graphicx}
\usepackage{hyperref}
\hypersetup{colorlinks=true, linkcolor=black, citecolor=black, urlcolor=black}
\usepackage{amsmath,amsopn,amssymb,amsfonts,stmaryrd}
\usepackage{verbatim}
\usepackage{amsthm}
\usepackage{mathtools}
\usepackage{color}
\usepackage{enumitem}
\usepackage{bbm}
\usepackage{mathrsfs}
\usepackage{booktabs}
\usepackage{bm}
\usepackage{tensor}
\usepackage{cleveref}
\usepackage{float}

\newcommand{\N}{\mathbb N}

\theoremstyle{plain}
\newtheorem{theorem}{Theorem}[section]
\newtheorem{corollary}[theorem]{Corollary}
\newtheorem{lemma}[theorem]{Lemma}
\newtheorem{proposition}[theorem]{Proposition}

\newtheorem{challenge}{Challenge}

\theoremstyle{remark}
\newtheorem*{remark*}{Remark}
\newtheorem*{remarks*}{Remarks}

\theoremstyle{definition}

\newtheorem*{notation*}{Notation}
\newtheorem*{claim*}{Claim}
\newtheorem*{Conjecture1*}{Conjecture 1}
\newtheorem*{Conjecture2*}{Conjecture 2}
\newtheorem*{Conjecture3*}{Conjecture 3} 
\newtheorem*{Conjecture4*}{Conjecture 4}

\numberwithin{equation}{section}

\setlist[itemize]{noitemsep, topsep=0pt}

\makeatletter
\newcommand{\vast}{\bBigg@{3}}
\newcommand{\Vast}{\bBigg@{5}}
\makeatother

\allowdisplaybreaks

\def\func#1{\mathop{\rm #1}}%

\newcommand{\eisenst}{\mathrm{E}}
\newcommand{\hilfsftn}{F}
\newcommand{\links}{\eta }
\newcommand{\Mnhkoeff}{\left( M_{n}^{h}\right) }
\newcommand{\Mnhexp}{\nu }
\newcommand{\naeherung}{\delta }
\newcommand{\negmaxmod}{r}
\newcommand{\nop}{\mathrm{NO}}
\newcommand{\pol}{\mathrm{P}}
\newcommand{\rechts}{\zeta }
\newcommand{\residuum}{\Omega }
\newcommand{\spektral}{\Delta }
\newcommand{\zerlegung}{K}

\newcommand{\auxftn}{P}
\newcommand{\auxprodftn}{Q}
\newcommand{\coefauxprod}{q}
\newcommand{\colorset}{E}
\newcommand{\rectangle}{R}
\makeatletter
\@namedef{subjclassname@2020}{%
	\textup{2020} Mathematics Subject Classification}
\makeatother
\begin{document}
%%\title{Spectral Properties of\\ Nekrasov--Okounkov Polynomials} 
%%\title{A Perron--Frobenius Approach for Dominant Zeros of Nekrasov--Okounkov Polynomials}
%%\title{Dominant Zeros of Nekrasov--Okounkov Polynomials via Perron--Frobenius Theory}
\title{Dominant Zeros of Nekrasov--Okounkov Polynomials}
%%{Deformation of the Nekrasov-Okounkov Polynomials and the Zeros of Largest Modulus}
\author{Bernhard Heim}
\address{Department of Mathematics and Computer Science\\Division of Mathematics\\University of Cologne\\ Weyertal 86--90 \\ 50931 Cologne \\Germany}
\address{Lehrstuhl A f\"{u}r Mathematik, RWTH Aachen University, 52056 Aachen, Germany}
\email{bheim@uni-koeln.de}
\email{bernhard.heim@rwth-aachen.de}
\author{Markus Neuhauser}
\address{Kutaisi International University, 5/7, Youth Avenue,  Kutaisi, 4600 Georgia}
\address{Lehrstuhl
f\"{u}r Geometrie und
Analysis, RWTH Aachen University, 52056 Aachen, Germany}
\email{markus.neuhauser@kiu.edu.ge}
%%%
\subjclass[2020]{Primary 05A17, 05A19; Secondary 11P82, 15B48, 30C15}
\keywords{Nekrasov--Okounkov polynomials, D'Arcais polynomials, hook-length formula, zeros of polynomials, Perron--Frobenius theory, nonnegative matrices}
\dedicatory{\textsc{With Appendix~\ref{appxB} by Ken Ono}}
%\linenumbers
%%%
\begin{abstract}
We give an exact finite-dimensional Perron--Frobenius realization of the dominant zero of the Nekrasov--Okounkov polynomials $\nop _n(z)$.
For a normalized positive sequence $h=(h(n))_{n\ge 1}$ with $h(1)=1$, define $\pol _0^h(z)=1$ and, for $n\ge 1$,
\[ \pol _n^h(z)=\frac{z}{h(n)}\sum_{k=1}^n \sigma(k)\pol _{n-k}^h(z),\]
where $\sigma(k)$ denotes the sum of divisors of $k$. The Nekrasov--Okounkov polynomials are obtained from the specialization $h(n)=n$ by the shift $\nop _n(z)=\pol _n^h(z+1)$. We derive a Hessenberg determinant representation for $\pol _n^h(z)$. After separating the trivial zero at the origin, the remaining zeros of $\pol _n^h(-z)$ are identified with the eigenvalues of an explicit $(n-1)\times(n-1)$ nonnegative matrix $M_n^h$. We prove that $M_n^h$ is primitive and apply Perron--Frobenius theory to show that $\pol _n^h(z)$ has a unique zero of maximal modulus; this zero is real, negative, and simple. As a consequence, the same property holds for the Nekrasov--Okounkov polynomials. We also prove strict monotonicity of the associated spectral radii.
\end{abstract}
%%%%%%%%%%%%%%%%%%%%%%%%%%%%%%%%%%%%%%%%%%%%%%%%%%%%%%%%%%%%%%%%%%%%%%%%%%%%%%%%%%%%%%%%%%%%%%%%%%%%%%%%%%
%%%%%%%%%%%%%%%%%%%%%%%%%%%%%%%%%%%%%%%%%%%%%%%%%%%%%%%%%%%%%%%%%%%%%%%%%%%%%%%%%%%%%%%%%%%%%%%%%%%%%%%%%%
%%%%%%%%%%%%%%%%%%%%%%%%%%%%%%%%%%%%%%%%%%%%%%%%%%%%%%%%%%%%%%%%%%%%%%%%%%%%%%%%%%%%%%%%%%%%%%%%%%%%%%%%%%
%%%%%%%%%%%%%%%%%%%%%%%%%%%%%%%%%%%%%%%%%%%%%%%%%%%%%%%%%%%%%%%%%%%%%%%%%%%%%%%%%%%%%%%%%%%%%%%%%%%%%%%%%%
\maketitle
\setcounter{tocdepth}{1}
%%\tableofcontents
%%\medskip
\section{Introduction}

The Nekrasov--Okounkov hook-length formula occupies a remarkable position at the
interface of partition theory, modular forms, representation theory, and mathematical
physics. Originating in the work of Nekrasov and Okounkov on Seiberg--Witten theory
and random partitions \cite{NO06}, the identity relates weighted hook-length generating
functions to powers of Euler products and, ultimately, to powers of the Dedekind eta
function \cite{Ne55,On04}. The formula was independently discovered by Han \cite{Ha10} and
Westbury \cite{We06} and proved by several methods.
Let $\lambda\vdash n$ be a partition of $n$ %,
and let $H(\lambda)$
denote the multiset of hook lengths of $\lambda$. %, and let $\mathcal P$ denote the set of
%all partitions.
The Nekrasov--Okounkov hook-length formula is
\begin{equation*} \label{hook length}
\sum_{n=0}^{\infty} \nop _n(z)q^n
=
\sum_{\lambda %\in\mathcal P
}
q^{|\lambda|}
\prod_{h\in H(\lambda)}
\left(1+\frac{z}{h^2}\right)
=
\prod_{m=1}^{\infty}(1-q^m)^{-z-1}.
\end{equation*}
This identity belongs to a broader circle of results on hook lengths, contents, and partition statistics; see, for instance, Stanley \cite{St10}.
The corresponding Nekrasov--Okounkov polynomials $\nop _n(z)$ exhibit striking arithmetic
and analytic behavior.
%%%%%%%%%%%%%%%%%%%%%%%%%%%%%%
However, their zero distribution is only partially understood. In particular,
no general finite-dimensional structural explanation was previously available
for the distinguished zero of maximal modulus. This is the problem addressed
in the present paper.
%%%%%%%%%%%%%%%%%%%%%%%%%%%%%%%%%%%%%%%%%%%%%%%%%%%%%%%%%%%%%%%%% new
We focus on the spectral structure underlying these polynomials. Our starting
point is the observation that the polynomials $\nop _n(z)$ can be realized as
specializations of generalized D'Arcais polynomials \cite{DA13,HN20}. This
recursive formulation leads naturally to a Hessenberg determinant
representation. After a diagonal sign conjugation and the removal of the
trivial zero at the origin, the remaining zeros are identified with the
eigenvalues of an explicit primitive nonnegative matrix. Consequently, the
dominant zero is identified with the Perron zero of this matrix. This creates
a direct bridge between hook-length generating functions and the spectral
theory of nonnegative matrices \cite{BP94,HJ12,Se06}.

The novelty of the present approach is not merely the use of Perron--Frobenius theory, but the construction of the primitive nonnegative matrix itself. It converts a zero problem for partition-theoretic polynomials into a spectral-radius problem for an explicit Hessenberg matrix. This provides an exact finite-dimensional explanation of the distinguished real negative zero observed experimentally.

The study of zeros of partition-theoretic polynomials has a substantial history.
The D'Arcais polynomials of small degree appeared already in Serre's approach
\cite{Se85} to the lacunary property of powers of the Dedekind eta function for even powers.
Boyer and Goh studied partition polynomials and their asymptotic zero sets
\cite{BG08}, while Boyer and Parry investigated zero distributions for plane
partition polynomials and related zero-attractor phenomena \cite{BoPa12,BoPa21}.
More recently, questions related to D'Arcais and Nekrasov--Okounkov structures
have also appeared in the study of tuples of commuting permutations, coefficient
arrays, log-concavity, and asymptotic coefficient behavior; see the work by
Abdesselam, Brunialti, Doan, and Velie \cite{ABDV24}, Abdesselam \cite{Ab25},
Abdesselam and Starr \cite{AS26}, and recent preprints by Starr
\cite{St26a,St26b}. Related questions for hook polynomials and partition
statistics were studied by
Griffin, Ono, and Tsai \cite{GOT24},
Craig, Ono, and
Singh \cite{COS25},
Bridges, Craig, Folsom, and Rolen \cite{BCFR25},
and us \cite{HN19}.
The present paper is complementary to these approaches: rather than studying
asymptotic zero attractors or coefficient asymptotics, it gives an exact
finite-dimensional Perron--Frobenius approach for the dominant zero.

\section{Main Results}

Let $h=(h(n))_{n\ge1}$ be a normalized sequence of positive real numbers with $h(1)=1$.
The generalized D'Arcais polynomials $\pol _n^h(z)$ \cite{HNT20} are defined recursively by
\[
\pol _n^h(z)
=
\frac{z}{h(n)}
\sum_{k=1}^n
\sigma(k)\,\pol _{n-k}^h(z),
\qquad
\pol _0^h(z)=1,
\]
where $\sigma(k)$ denotes the sum of divisors of $k$. 
We mainly focus on
$h_s(n):=n^s$ with $s \in \mathbb{R}$. Then $s=1$ gives the D'Arcais polynomials, relevant for the Nekrasov--Okounkov polynomials.  For $s=0$, we have polynomials of Volterra type and a relation to quasi-modular forms: $$\sum_{n=0}^{\infty} \pol _n^{h_0}(24) q^n =\frac{1}{\eisenst _2(\tau)}.$$
Here $\eisenst _2(\tau)$ denotes the Eisenstein series of weight $2$ and $q=\mathrm{e}^{ 2 \pi \mathrm{i} \tau}$ where $\tau$ is in the upper complex
half plane \cite{On04, HN24}.

In this paper, we prove new results for dominant zeros of %the
generalized D'Arcais polynomials.

\begin{theorem}[Dominant zero for generalized D'Arcais polynomials]
\label{th:Darcais}
Let $h=(h(n))_{n\ge1}$ be a normalized sequence of positive real numbers, and let
$n\ge2$. Then the polynomial $\pol _n^h(z)$ has a unique zero of maximal modulus. This
zero is real, negative, and simple.
\end{theorem}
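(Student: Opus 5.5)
We follow the route announced in the abstract: realize the nonzero zeros of $\pol_n^h(-z)$ as the eigenvalues of an explicit nonnegative matrix, prove the matrix primitive, and invoke Perron--Frobenius.

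\textbf{Step 1: Hessenberg determinant.} Multiply the recursion by $h(m)$ to get, for $1\le m\le n$,
\[
h(m)\pol_m^h(z)-z\sum_{k=1}^m\sigma(k)\pol_{m-k}^h(z)=0 .
\]
These equations form a lower triangular linear system in the unknowns $\pol_1^h,\dots,\pol_n^h$, with right-hand side $z\sigma(m)\pol_0^h$. By Cramer's rule, $\big(\prod_{m\le n}h(m)\big)\pol_n^h(z)$ is an $n\times n$ lower Hessenberg determinant. Its entries are $h(m)$ on the superdiagonal (up to sign) and $z\sigma(i-j+1)$ on and below the diagonal. Every entry of the first column carries a factor $z$, so we factor out $z$ and write $\pol_n^h(z)=z\,\widetilde{\pol}_n(z)$. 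Here $\widetilde{\pol}_n$ has degree $n-1$ and $\widetilde{\pol}_n(0)=\sigma(1)\cdots/\prod h\neq0$. Concretely, $\pol_n^h(z)=z\sigma(n)/h(n)+\dots$, so $\widetilde{\pol}_n(0)\ne0$ as long as the lowest coefficient is $\prod_{m=2}^{n}$-type positive. This confirms the origin is a simple zero and is irrelevant for the dominant zero, since $n\ge2$ guarantees other zeros exist.

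\textbf{Step 2: Matrix form.} Substitute $z\mapsto -z$ and conjugate by $D=\mathrm{diag}((-1)^i)$ to make all signs nonnegative. Dividing each row by the appropriate $h$ factor then puts $\widetilde{\pol}_n(-1/w)$, or equivalently the reversed polynomial, into the form $c\cdot\det(wI-M_n^h)$. Here $M_n^h$ is an $(n-1)\times(n-1)$ Hessenberg matrix with positive subdiagonal entries, built from the $h(m)$ and ratios of $\sigma$'s, and nonnegative entries elsewhere. The sign conjugation is designed so that, since all $\sigma(k)>0$, every entry that is nonnegative before the conjugation stays nonnegative after it. To avoid inverting $z$, we aim for the normalization in which the zeros of $\widetilde{\pol}_n(-z)$ are exactly the eigenvalues of $M_n^h$. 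We expect this to come out naturally by expanding $\det(zA-B)$ with $A$ lower triangular with positive diagonal, the companion-type structure coming from $B$.

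\textbf{Step 3: Primitivity.} An upper Hessenberg nonnegative matrix with positive subdiagonal and positive last column, or positive first row, is irreducible: the path $1\to2\to\dots\to n-1\to1$ closes a cycle. It is also aperiodic as soon as one diagonal entry is positive, giving a loop, or cycles of coprime lengths exist. The entries are generated by $\sigma(1)=1$ and $\sigma(2)=3$, and the first-row entries are all positive, so cycles of every length $1,\dots,n-1$ exist. Hence $M_n^h$ is primitive. Perron--Frobenius then gives a simple eigenvalue $\rho>0$ strictly exceeding all other eigenvalues in modulus. Translating back, $\pol_n^h(z)$ has the unique dominant zero $-\rho$, which is real, negative, and simple. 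The case $n=2$ is a $1\times1$ positive matrix, $\pol_2^h=z(z+3)/h(2)$, and is trivial.

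\textbf{Main obstacle.} The delicate step is Step 2: choosing the sign and diagonal scaling so that \emph{every} entry of the resulting matrix is nonnegative, with positivity exactly where primitivity needs it. I would first carry out the $n=3,4$ cases by hand to identify the pattern for $M_n^h$, then prove the general entry formula by induction via cofactor expansion along the last row.
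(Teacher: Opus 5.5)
Your outline has the same skeleton as the paper: Hessenberg determinant, sign conjugation, primitivity, then Perron--Frobenius. But Step~2, which you rightly flag as the obstacle, has a genuine gap, and the reason you give for nonnegativity does not work.

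Writing $\det(zA-B)=\det A\,\det(zI-A^{-1}B)$ with $A=H_n^{-1}L_n$ lower triangular, as you propose, puts the inverse of the Toeplitz matrix $L_n$ of divisor sums into the matrix. There is no way around this if the eigenvalues are to be the zeros themselves. Your ``reversed polynomial'' alternative realizes the reciprocals instead, so Perron--Frobenius would single out the zero of \emph{smallest} modulus.

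The entries of $L_n^{-1}$ are $(-1)^{i-j}c_{i-j}$, where $\sum_m c_mt^m=1/S(t)$ and $S(t)=\sum_m\sigma(m+1)(-t)^m$. After conjugating by $D_n$ and deleting the zero first column, one obtains $M_n^h$ with entries $c_{i-j+1}h(j)$ for $i\ge j-1$. These are not ratios of $\sigma$'s. So nonnegativity of your matrix, and the strict sign pattern your Step~3 needs, is exactly the statement $c_1,\dots,c_{n-1}\ge0$ (respectively $>0$).

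This does not follow from $\sigma(k)>0$, since the recurrence for $c_m$ has alternating signs. The paper needs a separate result, Proposition~\ref{prop:pos}. It is proved analytically, via Rouch\'e's theorem, the simple pole of $1/S$ at $t_0\approx0.41$ with negative residue, Cauchy estimates, and a check of $c_1,\dots,c_{20}$. Alternatively it is proved combinatorially via Ono's injection in Appendix~\ref{appxB}.

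This is not a technicality: with an arbitrary positive weight sequence in place of $\sigma$, the theorem is false. For weights $1,1,100$ and $h\equiv1$, the recursion gives $P_3(z)=z(z^2+2z+100)$. Its nonzero zeros $-1\pm\mathrm{i}\sqrt{99}$ are non-real and of equal modulus. Correspondingly, the analogue of $c_2$ is $1^2-100<0$, and the $2\times2$ matrix is not nonnegative.

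Computing $n=3,4$ by hand and inducting on cofactors would only reproduce the entry formula, which is just $L_n^{-1}$. The missing idea is a proof that every $c_m$ is positive, and that proof must use arithmetic properties of $\sigma$.

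Once that is supplied, your graph argument for primitivity is a valid alternative to the paper's induction showing $(M_n^h)^{n-2}>0$. In the lower Hessenberg orientation of $M_n^h$, the positive superdiagonal and positive first column give cycles of every length, and the positive diagonal gives loops.
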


The theorem may be viewed as a Perron--Frobenius phenomenon for recursively defined
partition-type polynomials. The proof proceeds by realizing the zeros of $\pol _n^h(z)$ as
eigenvalues of an explicit Hessenberg matrix and then establishing primitivity after a suitable
diagonal sign conjugation and restriction. This allows the application of the classical Perron--Frobenius
theorem for primitive matrices \cite{BP94,HJ12,Se06}. 
This finite-dimensional realization is the key structural input in the proof.

It complements earlier zero-transfer and zero-estimate results for recursively defined polynomials given in \cite{HNT23,HN24B}.

As an immediate consequence we obtain the corresponding result for the Nekrasov--Okounkov
polynomials.

\begin{corollary}[Nekrasov--Okounkov polynomials]\label{NO}
Let $\nop _n(z)$ be the $n$th Nekrasov--Okounkov polynomial. Then $\nop _n(z)$ has a unique
zero of maximal modulus. This zero is real, negative, and simple.
\end{corollary}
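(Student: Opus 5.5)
The plan is to deduce the corollary directly from Theorem~\ref{th:Darcais}, specialized to $h(n)=n$, using the shift relation $\nop _n(z)=\pol _n^h(z+1)$ stated in the abstract. The only work is to check that the translation $w\mapsto w-1$ preserves the property of having a unique, simple, real negative zero of maximal modulus. This holds because the dominant zero lies on the negative real axis.

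First I dispose of the small cases. For $n=0$ the polynomial is constant, so there is nothing to prove and the statement is understood for $n\ge1$. For $n=1$ we have $\pol _1^h(z)=z$, hence $\nop _1(z)=z+1$. Its only zero is $-1$, which is trivially unique, real, negative and simple.

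For $n\ge2$, Theorem~\ref{th:Darcais} provides a simple real zero $w_0<0$ of $\pol _n^h$ such that $|w|<|w_0|$ for every other zero $w$. The zeros of $\nop _n$ are exactly the numbers $w-1$, with the same multiplicities, since the substitution is an affine change of variable. So $z_0:=w_0-1$ is a simple zero of $\nop _n$, and it is real and negative. Because $w_0<0$, its modulus is $|z_0|=|w_0|+1$. For any other zero $z=w-1$ of $\nop _n$ with $w\ne w_0$, the triangle inequality gives
\[
|z|=|w-1|\le |w|+1<|w_0|+1=|z_0|.
\]
The strict inequality comes from the strict dominance in Theorem~\ref{th:Darcais}. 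Hence $z_0$ is the unique zero of $\nop _n$ of maximal modulus, and it is real, negative and simple.

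The only point needing care is that strictness survives the shift. This is where negativity of $w_0$ is essential: the shift by $-1$ increases $|w_0|$ by exactly $1$, while it increases the modulus of any other zero by at most $1$. The other ingredient is the trivial zero $w=0$ of $\pol _n^h$. It maps to $z=-1$, whose modulus $1$ is covered by the same inequality, so it needs no separate treatment. No further obstacle is expected, since all the substantive work is contained in Theorem~\ref{th:Darcais}.
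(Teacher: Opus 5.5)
Your proposal is correct and follows the paper's proof essentially verbatim. Both specialize Theorem~\ref{th:Darcais} to $h(n)=n$, pass to $\nop _n$ via the shift $z\mapsto z+1$, and conclude from $|w-1|\le |w|+1<|w_0|+1$, which uses $w_0<0$. Your separate treatment of $n=1$, where Theorem~\ref{th:Darcais} does not apply, is a small point the paper leaves implicit.
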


This result contributes to the conjectural picture proposed in \cite{HN19} that the zeros of
Nekrasov--Okounkov polynomials are simple. Moreover, the theorem provides a conceptual
explanation for the appearance of distinguished negative real zeros observed experimentally.

A key ingredient in the proof is the following determinant representation of Hessenberg type. 
Let $c_0:=1$ and, for $n \geq 1$, define
 \begin{equation}\label{cm}
c_n := \sum_{k=0}^{n-1} (-1)^{n-k+1} \sigma(n-k+1) \,\, c_k, \quad  n \geq 1.
\end{equation}
%%%%%%%%%%
%%
For $n\ge2$, define the $(n-1)\times(n-1)$ matrix
\[
M_n^h=\left(\Mnhkoeff _{ij}\right)_{1\le i,j\le n-1},
\qquad
\Mnhkoeff _{ij}=
\begin{cases}
c_{i-j+1}h(j), & i\ge j-1,\\
%h(i+1), & j=i+1,\\
0, & j>i+1.
\end{cases}
\]

\begin{theorem}[Determinant representation] \label{th:Hessenberg}
For $n\ge2$ the generalized D'Arcais polynomials satisfy the following identity.
\begin{equation*}
\pol _n^h(-z)
= \frac{(-1)^n \, z}{
\prod_{k=1}^n h(k)} \,\,
\det\!\left(zI_{n-1}-M_n^h\right).
\end{equation*}
The matrix $M_n^h$ has nonnegative entries.
\end{theorem}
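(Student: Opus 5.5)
The plan has two parts: first a Hessenberg determinant for $\pol _n^h$ coming directly from the recursion, and then a Toeplitz change of basis built from the numbers $c_m$ that turns this determinant into a characteristic polynomial.

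\textbf{Step 1 (Hessenberg form).} Rewrite the recursion as the triangular linear system
\[
h(m)\pol _m^h(z)-z\sum_{k=1}^{m-1}\sigma(k)\pol _{m-k}^h(z)=z\sigma(m),\qquad 1\le m\le n,
\]
in the unknowns $\pol _1^h,\dots,\pol _n^h$. Cramer's rule for the last unknown, followed by moving the right-hand column to the front, gives
\[
\pol _n^h(z)\prod_{k=1}^n h(k)=\det H_n(z),
\]
where $H_n(z)$ is an $n\times n$ Hessenberg matrix with entries $z\sigma(i-j+1)$ for $i\ge j$, entries $-h(i)$ (up to sign and index conventions) on the superdiagonal, and zeros above it. This is a standard computation. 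I would fix the conventions by checking $n=1,2$.

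\textbf{Step 2 (extracting $z$ and reaching $zI-M_n^h$).} Replace $z$ by $-z$. The first column of $H_n(-z)$ is $-z(\sigma(1),\dots,\sigma(n))^T$, so $z$ factors out of that column. To turn the remaining lower-triangular Toeplitz block in $z\sigma$ into $z$ times the identity, left-multiply by the lower-triangular Toeplitz matrix of the power series inverse to $\sum_{j\ge1}\sigma(j)(-q)^{j-1}$. Setting $d_m=(-1)^mc_m$, the recursion \eqref{cm} becomes $d_m=-\sum_{k<m}\sigma(m-k+1)d_k$. This says exactly
\[
\Bigl(\sum_m d_mq^m\Bigr)\Bigl(\sum_{j\ge1}\sigma(j)q^{j-1}\right)=1,
\]
so the $c_m$, with the alternating signs, are precisely the entries of this inverse Toeplitz matrix. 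The transformation has determinant $1$.

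After multiplying, the superdiagonal $h$-entries are transported into full lower-Hessenberg columns whose entries are $c_{i-j+1}h(j)$. A diagonal conjugation by $\operatorname{diag}((-1)^i)$ removes the signs $(-1)^{i-j+1}$. Expanding along the first column then leaves $z\det(zI_{n-1}-M_n^h)$ together with the global factor $(-1)^n/\prod h(k)$. I would verify the index shift ($c_{i-j+1}$ versus $c_{i-j}$) and the global sign on $n=2,3$ before writing out the general bookkeeping.

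\textbf{Step 3 (nonnegativity).} The entries of $M_n^h$ are $c_{i-j+1}h(j)$ with $h>0$, so the claim reduces to $c_m\ge0$ for all $m$. Equivalently, the power series
\[
\frac{1}{\sum_{j\ge1}\sigma(j)(-q)^{j-1}}
\]
has nonnegative coefficients. The first values are $c_0,\dots,c_3=1,3,5,10$, which supports the claim. I expect this to be the main obstacle, since $\sigma$ is neither monotone nor log-convex, so Kaluza-type lemmas do not apply directly.

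For Step 3 I would try the following, in order.
\begin{itemize}
\item \emph{Product formula.} Use $\sum\sigma(j)q^j=q\frac{d}{dq}\log\prod(1-q^m)^{-1}$ and evaluate at $-q$, splitting $\prod(1-(-q)^m)$ into even and odd factors. The goal is to write $S(-q)$ as a product or quotient of series whose reciprocals visibly have nonnegative coefficients.
\item \emph{Inductive fallback.} If that fails, prove by induction an inequality of the form $c_m\ge\theta c_{m-1}$ for a suitable $\theta>1$. Then bound the alternating sum in \eqref{cm} by grouping consecutive terms $\sigma(\ell+1)c_k-\sigma(\ell)c_{k+1}$ and using crude bounds $\ell\le\sigma(\ell)\le \ell(1+\log \ell)$, so that the positive terms dominate.
\end{itemize}
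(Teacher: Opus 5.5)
Your Steps~1 and~2 are sound and follow essentially the paper's route: the Hessenberg determinant $\det(zL_n-H_nU_n)=\prod_k h(k)\,\pol_n^h(z)$, left multiplication by $L_n^{-1}$, conjugation by $D_n$, and expansion along the first column. There is one slip. The Toeplitz matrix that turns $zL_n$ into $zI_n$ belongs to $1/\sum_{j\ge1}\sigma(j)q^{j-1}$, with entries $(-1)^{i-j}c_{i-j}$, not to $1/\sum_{j\ge1}\sigma(j)(-q)^{j-1}$; your next sentence has it right.

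The genuine gap is Step~3. Nonnegativity of $M_n^h$ is part of the statement, and it is equivalent to $c_m\ge0$ for all $m$. You reduce to this correctly, but then only list strategies, and neither works as described.

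\emph{Product formula.} There is nothing to invert. $\sum_j\sigma(j)q^j$ is a logarithmic derivative, so splitting $\prod_m(1-(-q)^m)$ into even and odd factors only splits $S$ into a sum of two Lambert series. $S$ itself is $(\mathrm{E}_2-1)/(24t)$ with $\mathrm{E}_2$ taken at $q=-t$; it is quasimodular, not an eta-quotient, so it has no product factorization whose reciprocal is visibly positive.

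\emph{Induction.} The induction cannot close with a hypothesis of the form $c_k\ge\theta c_{k-1}$ alone. Take a positive sequence with $x_{m-1}=1$, $x_{m-2}=1/\theta$ and $x_k$ negligible for $k\le m-3$. It satisfies that hypothesis, yet the right-hand side of \eqref{cm} is then about $3-4/\theta$, which is $<\theta$ for every $\theta>0$ because $\theta^2-3\theta+4>0$. You would therefore have to bound the positive terms $7c_{m-3},12c_{m-5},\dots$ from below, i.e.\ carry two-sided ratio bounds. But the true ratios oscillate ($3,5/3,2,5/2,\dots$). Moreover, interval bookkeeping sends a window $[\lambda-\epsilon,\lambda+\epsilon]$ around the limit $\lambda=1/t_0\approx2.43$ to one of half-width about $3\epsilon$, so the window expands rather than contracts. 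The crude bound $\sigma(\ell)\le\ell(1+\log\ell)$ only makes this worse.

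What is missing is a global argument, which the paper supplies in Proposition~\ref{prop:pos}. The paper gives two proofs.

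\emph{Analytic proof (Appendix~\ref{a:positiv}).} By Rouch\'e's theorem with the truncation $S_{19}$, $S$ has a unique zero $t_0\in(315/767,586/1423)$ in $|t|<1/2$, and it is simple with $S'(t_0)<0$. Hence $c_m=-\Omega\,t_0^{-m-1}+\omega_m$ with $\Omega=1/S'(t_0)<0$. Cauchy's estimate on $|t|=1/2$ gives $|\omega_m|\le C\,2^m$, so the pole term dominates for $m\ge21$, and $m\le20$ is checked directly.

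\emph{Combinatorial proof (Appendix~\ref{app:ono-challenge3}).} Write $1/S=P/(1-Q)$ with $P(t)=(1+t)/(1-t)$ and $Q=1-PS$. The coefficients of $Q$ are $q_n=(-1)^N\sigma(N)+2\sum_{r=1}^{N-1}(-1)^r\sigma(r)$ with $N=n+1$. They are shown to be nonnegative by an explicit injection from odd-area to even-area colored row-marked rectangles. This is exactly the positivity-revealing factorization your first route was looking for, but it uses a rational factor rather than a product formula for $S$.
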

For instance, for $n=4$ the matrix occurring in Theorem 2.3 is
\[
M^h_4=
\begin{pmatrix}
 c_1 & h(2)&0\\
 c_2 & c_1h(2)&h(3)\\
 c_3 & c_2h(2)&c_1h(3)
\end{pmatrix}.
\]
Thus the entries are positive on and below the first superdiagonal and vanish strictly above it. This is the sign pattern which later implies primitivity.

We write $A\geq 0$ if all entries of the real square matrix $A$ are
nonnegative. Such a matrix is called primitive if there exists an integer
$k\geq 1$ such that $A^k>0$.

\begin{theorem}
\label{th:primitive}
Let \(n\geq 2\). The matrix \(M_n^h\) is primitive. Consequently, its
spectral radius \(\rho_n^h>0\) is an algebraically simple eigenvalue, and every
other eigenvalue \(\beta \) of \(M_n^h\) satisfies
\[
|\beta |<\rho_n^h.
\]
Moreover, there exist positive right and left Perron vectors
\(x,y\in\mathbb R^{n-1}\), normalized by
\[
\sum_{k=1}^{n-1}x_k=1,
\qquad
 y^T x=1,
\]
such that
\[
M_n^h x=\rho_n^h x,
\qquad
 y^T M_n^h=\rho_n^h y^T,
\]
and
\[
\lim_{m\to\infty}
\left(\frac{M_n^h}{\rho_n^h}\right)^m
=
xy^T.
\]
\end{theorem}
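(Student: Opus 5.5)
The plan is to prove primitivity directly from the sign pattern of $M_n^h$, and then read off the remaining assertions from the classical Perron--Frobenius theorem for primitive matrices \cite{BP94,HJ12,Se06}.

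Theorem~\ref{th:Hessenberg} gives $M_n^h\ge 0$. What we need in addition is strict positivity on the diagonal, on the first superdiagonal, and below the diagonal. The superdiagonal entries are $h(j)>0$, since $c_0=1$. The remaining entries are $c_m h(j)$ with $m\ge1$, so the crux is to show $c_m>0$ for all $m\ge 1$.

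The approach to $c_m>0$ goes through generating functions. Setting $C(q)=\sum_{m\ge0}c_mq^m$, the recursion \eqref{cm} becomes
\[
C(q)\Bigl(1+\sum_{j\ge1}(-1)^{j}\sigma(j+1)q^j\Bigr)=1 .
\]
Writing $S(q)=\sum_{k\ge1}\sigma(k)q^k$, this says
\[
C(q)=\frac{-q}{S(-q)}.
\]
Thus $(-1)^m c_m$ is the coefficient of $q^{m}$ in $q/S(q)$, and $c_m>0$ is equivalent to saying that $q/S(q)$ has coefficients of strictly alternating sign.

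To establish the alternation I would use a Kaluza-type criterion. If a sequence $a_k=\sigma(k+1)$ with $a_0=1$ were log-convex, its reciprocal series would have a strictly negative tail, which is exactly the alternation after $q\mapsto -q$. The difficulty is that $\sigma$ is not log-convex, so this argument cannot be applied as it stands. The fallback is to use the relation that comes from logarithmic differentiation of the Euler product, namely $S(q)=q\,\frac{d}{dq}\log\prod(1-q^m)^{-1}$, which rewrites $C$ in terms of $\prod(1-(-q)^m)$ and its derivative, and then to attempt a direct sign analysis. If the paper supplies a separate lemma on $c_m>0$ (for instance one based on a combinatorial interpretation), I would invoke it at this point. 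This step is the main obstacle.

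Once positivity of the entries is in hand, primitivity follows from a graph argument. The digraph of $M_n^h$ has an edge $i\to j$ whenever $j\le i+1$. In particular it contains a loop at every vertex, the edges $i\to i+1$, and the edge $n-1\to 1$, so it is strongly connected. An irreducible matrix with a positive diagonal entry is primitive, and concretely $(M_n^h)^{n-2}>0$: to go from $i$ to $j$ one climbs by $+1$ steps if $j>i$, or jumps down in a single step, and pads the path with loops to reach length exactly $n-2$. (For $n=2$ the matrix is the $1\times1$ matrix $(c_1)=(1)>0$.)

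The Perron--Frobenius theorem for primitive matrices then yields the remaining claims. It gives that $\rho_n^h>0$ is an algebraically simple eigenvalue, that every other eigenvalue satisfies $|\beta|<\rho_n^h$, and that there are positive right and left eigenvectors $x,y$, which we normalize by $\sum_k x_k=1$ and $y^Tx=1$. Finally, the Jordan decomposition $M_n^h=\rho_n^h xy^T+R$, with $xy^T R=R\,xy^T=0$ and spectral radius of $R$ strictly less than $\rho_n^h$, gives
\[
\lim_{m\to\infty}\left(\frac{M_n^h}{\rho_n^h}\right)^m=xy^T.
\]
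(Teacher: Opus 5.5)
Your proposal is correct and follows essentially the paper's route. With $c_m>0$ supplied by Proposition~\ref{prop:pos}, which the paper's Lemma~\ref{lem:primitive-Mnh} also invokes, your walk argument on the banded sign pattern gives $(M_n^h)^{n-2}>0$; the paper reaches the same conclusion by induction on the band width, and Theorem~\ref{th:PF} then yields the rest.

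There are two harmless slips. First, $M_2^h=(c_1)=(3)$, not $(1)$. Second, your Kaluza heuristic points the wrong way: log-convexity of $(\sigma(k+1))_{k\ge0}$ would force $(-1)^{m+1}c_m\ge0$, not $c_m>0$. Neither affects the argument, since you rely on the paper's positivity result.
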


%%%%%%%%%%%%%%%%%%%%%%%%%%%%%%%%%%%%%%%
The proof is given in Section 
\ref{th:frob} 
%\ref{sec:proof-main} 
after the determinant
representation and the primitivity of $M_n^h$ have been established.
The special structure of the matrices \(M_n^h\) permits a comparison of the spectral radii as \(n\) varies. This yields the following strict monotonicity result, which is not a consequence of Perron--Frobenius theory alone.
%%%%%%%%%%%%%%%%%%%%%%%%%%%%%%%%%%%%%%%%%%%%%%%%%%%%%%%%%%%%%%%%%%%%
\begin{theorem}[Monotonicity of the spectral radius] \label{th:Mono}
For any sequence of positive numbers 
$\left( h(n)\right) _{n \geq 1} $ with $h(1)=1$ we obtain the strict inequalities
\begin{equation*}
0 < \rho_2^h < \rho_3^h < \rho_4^h < \ldots .
\end{equation*}
\end{theorem}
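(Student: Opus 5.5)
The plan is to compare $M_n^h$ with $M_{n+1}^h$ and use the monotonicity of the Perron root for nonnegative matrices. The key structural observation is that the entries $(M_n^h)_{ij}=c_{i-j+1}h(j)$ for $i\ge j-1$ do not depend on $n$. Hence $M_n^h$ is exactly the leading principal $(n-1)\times(n-1)$ submatrix of $M_{n+1}^h$. Positivity of $\rho_2^h$ is immediate: $M_2^h=(c_1)$ with $c_1=\sigma(2)c_0=3>0$, so $\rho_2^h=3$.

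For strict monotonicity, consider the $n\times n$ matrix $B:=M_n^h\oplus 0$, obtained by bordering $M_n^h$ with a zero last row and column. Then $\rho(B)=\rho_n^h$, and entrywise $0\le B\le M_{n+1}^h$. The entries of $B$ and $M_{n+1}^h$ agree except in the last row and column, where $M_{n+1}^h$ has the entries $c_{n-j+1}h(j)$ ($j\le n$) and $h(n)$ in position $(n-1,n)$. At least one such entry is strictly positive, for instance $(M_{n+1}^h)_{n,n}=c_1h(n)>0$, using $c_k>0$, which is implicit in Theorem~\ref{th:primitive}. By Theorem~\ref{th:primitive}, $M_{n+1}^h$ is primitive, hence irreducible. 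I would then invoke the standard strict comparison theorem for irreducible nonnegative matrices: if $0\le B\le A$ with $A$ irreducible and $B\ne A$, then $\rho(B)<\rho(A)$ \cite{HJ12,BP94}. This gives $\rho_n^h<\rho_{n+1}^h$.

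If a self-contained argument is preferred instead of citing the comparison theorem, I would argue directly. Let $x>0$ be the right Perron vector of $M_{n+1}^h$ and $y\ge0$, $y\ne0$, a left Perron vector of $B$, so $y^TB=\rho_n^h y^T$. Then
\[
\rho_{n+1}^h\,y^Tx=y^TM_{n+1}^hx\ge y^TBx=\rho_n^h\,y^Tx,
\]
which gives $\rho_{n+1}^h\ge\rho_n^h$ since $y^Tx>0$. For strictness, note that $y$ may be taken as $(\tilde y,0)$, with $\tilde y>0$ the left Perron vector of the primitive $M_n^h$. The difference $y^T(M_{n+1}^h-B)x$ includes the term $\tilde y_{n-1}(M_{n+1}^h)_{n-1,n}x_n=\tilde y_{n-1}h(n)x_n>0$, coming from the superdiagonal entry. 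Hence the inequality is strict.

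The main obstacle is the bookkeeping that confirms the submatrix embedding, namely that the entry formula is indeed independent of $n$. The other point to check is that the superdiagonal entry $h(n)$ lies in a row where the left Perron vector of the smaller matrix is positive. For $n=2$ this needs $\tilde y=(1)$, which is trivially fine. Everything else follows from Theorem~\ref{th:primitive}.
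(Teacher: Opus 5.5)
Your proof is correct. Both of your arguments rest on the same structural fact the paper uses: $M_n^h$ is the leading principal submatrix of $M_{n+1}^h$, and the new border carries strictly positive entries, notably $(M_{n+1}^h)_{n-1,n}=h(n)$. Where you differ from the paper is in how the Perron--Frobenius comparison is carried out.

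The paper does not cite the monotonicity theorem for irreducible matrices. Instead it extends the \emph{right} Perron vector $x>0$ of $M_n^h$ to $x_\xi=(x,\xi)$ with a suitably small $\xi>0$, splitting into the cases $\rho_n^h\le c_1h(n)$ and $\rho_n^h>c_1h(n)$. It then verifies $M_{n+1}^hx_\xi\ge\rho_n^hx_\xi$ with a strict component and applies Lemma~\ref{comparison}, whose proof pairs this test vector with the positive \emph{left} Perron vector of $M_{n+1}^h$.

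Your self-contained argument is the transpose of this: you extend the left Perron vector of $M_n^h$ by $0$ and pair it with the positive right Perron vector of $M_{n+1}^h$. Padding by zero suffices because positivity of the partner vector alone gives $y^Tx>0$ and exposes the strict term $\tilde y_{n-1}h(n)x_n$. So you need neither the choice of $\xi$ nor the case split. In fact the paper's argument would also go through with $\xi=0$, since the proof of Lemma~\ref{comparison} only uses that the test vector is nonnegative and nonzero.

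Your first route, via ``$0\le B\le A$, $A$ irreducible, $B\ne A$ implies $\rho(B)<\rho(A)$,'' is the shortest. It shows that the monotonicity is standard Perron--Frobenius comparison combined with the nesting of the matrices. The paper's hands-on version buys self-containment, needing only Theorem~\ref{th:PF}.

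One small point: the positivity $c_k>0$ you invoke comes from Proposition~\ref{prop:pos}, not from anything implicit in Theorem~\ref{th:primitive}. You do not actually need it here, since $h(n)>0$ already makes $B\ne M_{n+1}^h$.
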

Let $\{ h(n)\,: n \in \N\}$ be bounded.
It was proved in \cite{HN24} that then the
set of all zeros of all $\pol _n^h(z)$ is bounded. Therefore, we obtain:

\begin{corollary}
Assume that $h=(h(n))_{n\ge1}$ is bounded and positive, with $h(1)=1$. Then the increasing sequence $(\rho_n^h)_{n\ge2}$ has a finite limit. More precisely,
\[
0<\rho_2^h<\rho_3^h<\ldots,
\qquad
\lim_{n\to\infty}\rho_n^h\le 9.7225\,\,\sup_{m\ge1}h(m).
\]
\end{corollary}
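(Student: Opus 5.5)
The statement to prove is the final Corollary: if $h$ is bounded, then $\lim_{n\to\infty}\rho_n^h$ exists, is finite, and is at most $9.7225\,\sup_{m}h(m)$.

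\emph{Existence of the limit.} By Theorem~\ref{th:Mono} the sequence $(\rho_n^h)_{n\ge2}$ is strictly increasing and positive. It therefore suffices to show that it is bounded above, and any upper bound then also bounds the limit.

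\emph{Translating $\rho_n^h$ into a zero.} Theorem~\ref{th:Hessenberg} gives
\[
\pol_n^h(-z)=\frac{(-1)^n z}{\prod_{k=1}^n h(k)}\det\!\left(zI_{n-1}-M_n^h\right).
\]
The prefactor $\frac{(-1)^n}{\prod_{k=1}^n h(k)}$ is nonzero. Hence the zeros of $\pol_n^h$ are $0$ together with the points $-\beta$, where $\beta$ runs over the eigenvalues of $M_n^h$.

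By Theorem~\ref{th:primitive}, $\rho_n^h$ is itself an eigenvalue of $M_n^h$. So $-\rho_n^h$ is a zero of $\pol_n^h$, and it is exactly the dominant negative zero described in Theorem~\ref{th:Darcais}. Consequently
\[
\rho_n^h\le \sup\{|z| : \pol_m^h(z)=0 \text{ for some } m\}.
\]

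\emph{Bounding the zeros.} Here I invoke the result of \cite{HN24} cited just before the statement. For bounded $h$, every zero of every $\pol_n^h$ lies in the disc $|z|< \kappa\,\sup_m h(m)$, with an explicit constant $\kappa\approx 9.7225$. This constant arises from the growth of $\sigma$ against a geometric comparison series.

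The step needing care is this last one: making sure the bound in \cite{HN24} is stated in the scaled form $\kappa\sup h$, with this value of $\kappa$.

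If only the unscaled statement is available, I would rederive the scaling from the recursion as follows.
\begin{itemize}
\item Write $H=\sup_m h(m)$. Suppose $|z|\ge \kappa H$.
\item Dividing the recursion by $\pol_{n-1}^h(z)$ and arguing by induction, show that the ratio $\left|\pol_n^h/\pol_{n-1}^h\right|$ stays bounded below by some $q>0$.
\item The constant $\kappa$ is characterised by $\sum_{k\ge1}\sigma(k)/q^{k-1}$ being balanced against $q$. This is the defining equation that produces the numerical value $9.7225$.
\item Only the upper bound $h(n)\le H$ enters, through the factor $z/h(n)$, which has modulus at least $|z|/H$. This is what yields the linear dependence on $H$.
\end{itemize}

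Combining the three steps gives $\lim_{n\to\infty}\rho_n^h\le 9.7225\,\sup_m h(m)$.
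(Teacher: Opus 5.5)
Your main argument is the paper's own: strict monotonicity from Theorem~\ref{th:Mono}, the observation via Theorem~\ref{th:Hessenberg} that $-\rho_n^h$ is a zero of $\pol_n^h$, and the uniform zero bound for bounded $h$ from the cited Heim--Neuhauser work. The paper likewise simply invokes that bound, so as a citation-based proof this is fine.

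Your fallback derivation, however, would not produce $9.7225$. Suppose you run a ratio induction that uses only moduli: assume $|\pol_j^h(z)|\ge q|\pol_{j-1}^h(z)|$ and bound $\sum_{k\ge2}\sigma(k)\pol_{n-k}^h(z)$ by the triangle inequality. The induction closes only under the condition
\[
\frac{|z|}{H}\Bigl(1-\sum_{k\ge2}\sigma(k)q^{1-k}\Bigr)\ge q .
\]
Optimizing over $q$ (the optimum is near $q\approx 8$) gives a threshold of only about $14.6\,H$.

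The constant $9.7225$ is instead $1/\max_{0<u<t_0}uS(u)$, where
\[
uS(u)=\sum_{k\ge1}(-1)^{k-1}\sigma(k)u^k .
\]
The maximum is $\approx 0.10285$, attained near $u\approx 0.22$. The alternating signs are essential, and they are exactly what the triangle inequality discards.

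If you want a self-contained proof, the paper's own tools suffice. For $0<u<t_0$ put $x_j=u^{-j}$. Since $c_m>0$ and $h(j)\le H$ (note $H\ge h(1)=1$), you get
\[
(M_n^h x)_i\le H\,u^{-i-1}\sum_{m=0}^{i}c_m u^m<\frac{H}{uS(u)}\,x_i .
\]
The strict inequality holds because $\sum_{m\ge0}c_m u^m=1/S(u)$ converges on $(0,t_0)$ with positive terms. Now pair with the positive left Perron vector, as in Lemma~\ref{comparison} with the inequality reversed. This gives $\rho_n^h<H/(uS(u))$ for every such $u$. Hence $\rho_n^h<\kappa H$ with $\kappa=1/\max_{0<u<t_0}uS(u)<9.7225$, which proves the corollary without the external citation.
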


The paper is organized as follows. Section~\ref{a:hessenb}
proves the Hessenberg determinant representation for the generalized D'Arcais polynomials. Section~\ref{a:hessbew}
transforms this determinant representation into a finite-dimensional eigenvalue problem, introduces the matrix $M_n^h$, and reduces the required nonnegativity to the positivity of the coefficients $c_m$. Section~\ref{a:pf}
recalls the Perron--Frobenius theory for primitive nonnegative matrices and the comparison principle used later. Section~\ref{th:frob}
proves that $M_n^h$ is primitive. Section~\ref{sec:proof-main}
derives the dominant-zero theorem for %the
generalized D'Arcais polynomials, and %Section~8
applies it to the Nekrasov--Okounkov polynomials. Section~\ref{a:mono}
proves the strict monotonicity of the spectral radii $\rho_n^h$. Section~\ref{a:herausf}
records challenges and possible further directions, while Appendix~\ref{a:positiv}
gives a self-contained analytic proof of $c_m>0$. Appendix~\ref{app:ono-challenge3} presents Ken Ono's combinatorial solution to Challenge~\ref{cha:structural-positivity}, based on colored row-marked rectangles and an explicit injection.

%%%%%%%%%%%%%%%%%%%%%%%%%%%%%%%%%%%%%%%%% revise 6. Mai 2026 %%%%%%%%%%%%%%

\section{\label{a:hessenb}A Hessenberg Determinant Representation}
Let $H_n:= \func{diag}(h(1),h(2), \ldots,h(n))$ and $D_n= \func{diag}(1,-1, \ldots, (-1)^{n-1})$ be $n \times n$ matrices. Moreover, $I_n:= \func{diag}(1,1, \ldots,1)$.
For
\[
L_n:=\left( \left( L_{n}\right) %\ell
_{ij}\right)_{1 \leq i,j \leq n},
\qquad
%\ell
\left( L_{n}\right) _{ij}=
\begin{cases}
\sigma(i-j+1), & i\geq j,\\
0, & i<j,
\end{cases}
\]
its explicit lower-triangular Toeplitz form is
\[
L_n=
\begin{pmatrix}
\sigma(1) & 0 & 0 & \cdots & 0 \\
\sigma(2) & \sigma(1) & 0 & \cdots & 0 \\
\sigma(3) & \sigma(2) & \sigma(1) & \ddots & \vdots \\
\vdots & \vdots & \ddots & \ddots & 0 \\
\sigma(n) & \sigma(n-1) & \cdots & \sigma(2) & \sigma(1)
\end{pmatrix}.
\]
The matrix with entries equal to $1$ at positions $(i,i+1)$ and $0$ otherwise is the upper shift matrix (equivalently, the nilpotent Jordan block with eigenvalue zero). We have
%%%%%%%%%%%%%%%%%%%%%%%%%%%%%%%%%%%%%%%%%%%%
\[
(U_n)_{ij}
=
\begin{cases}
1, & j=i+1,\\
0, & \text{otherwise}.
\end{cases}
\]

%%%%%%%%%%%%%%%%%%%%%%%%%%%%%%%%%%%%%%%%%%%%
This gives the following Hessenberg determinant
representation.
\begin{proposition} We have
\begin{equation*} \label{Hessenberg}
\pol _n^{h}(z) =\det \left( z H_n^{-1} L_n - U_n \right).
\end{equation*}
\end{proposition}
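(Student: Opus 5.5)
We prove the identity by induction on $n$, expanding the Hessenberg determinant along its last row (equivalently, by the standard recurrence for lower Hessenberg determinants). The matrix $A_n := zH_n^{-1}L_n - U_n$ has entries
\[
(A_n)_{ij} = \frac{z\,\sigma(i-j+1)}{h(i)} \quad (i\ge j),\qquad (A_n)_{i,i+1} = -1,\qquad (A_n)_{ij}=0 \quad (j>i+1).
\]
So $A_n$ is lower Hessenberg with constant superdiagonal $-1$. Moreover, its leading principal $k\times k$ submatrix is exactly $A_k$, because $H_n^{-1}L_n$ and $U_n$ restrict compatibly to leading blocks. Setting $\det A_0:=1$, it therefore suffices to show that $d_n:=\det A_n$ satisfies the same recursion as $\pol_n^h(z)$, with the same initial value.

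The key step is the classical expansion for a lower Hessenberg matrix with superdiagonal entries $b_i=(A_n)_{i,i+1}$:
\[
d_n=\sum_{k=1}^{n} (A_n)_{n,k}\,(-1)^{n-k}\Bigl(\prod_{i=k}^{n-1} b_i\Bigr)\, d_{k-1}.
\]
To derive it, expand along the last row. Deleting row $n$ and column $k$ leaves a block lower triangular matrix: the first block is $A_{k-1}$, and the second block is upper triangular in rows $k,\dots,n-1$ with diagonal entries $b_k,\dots,b_{n-1}$. The sign of the cofactor is $(-1)^{n+k}$.

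Now substitute $b_i=-1$. The sign factor becomes $(-1)^{n-k}(-1)^{n-k}=1$, so
\[
d_n=\sum_{k=1}^{n}\frac{z\,\sigma(n-k+1)}{h(n)}\,d_{k-1}=\frac{z}{h(n)}\sum_{j=1}^{n}\sigma(j)\,d_{n-j},
\]
after reindexing with $j=n-k+1$. This is precisely the defining recursion of $\pol_n^h(z)$. Together with $d_0=1=\pol_0^h(z)$ (and, as a check, $d_1=z\sigma(1)/h(1)=z=\pol_1^h(z)$), induction gives $d_n=\pol_n^h(z)$ for all $n$.

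The only delicate point is the sign bookkeeping in the Hessenberg expansion, namely confirming that the cofactor sign $(-1)^{n+k}$ cancels exactly against the product $(-1)^{n-k}$ coming from the $-1$ superdiagonal. I would check this directly on $n=2$: $\det\begin{pmatrix} z & -1\\ z\sigma(2)/h(2) & z/h(2)\end{pmatrix} = \frac{z^2+z\sigma(2)}{h(2)}$, which agrees with the recursion.
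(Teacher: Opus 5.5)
Your proof is correct and is essentially the paper's argument. Both proceed by induction on $n$, expand along the last row, split each minor into $A_{k-1}$ times a triangular block with $-1$ on the diagonal, and use the sign cancellation $(-1)^{n+k}(-1)^{n-k}=1$; your convention $d_0=1$ just absorbs the $k=1$ term that the paper writes separately.

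One harmless slip: the block in rows $k,\dots,n-1$ and columns $k+1,\dots,n$ is lower triangular, not upper triangular. Its entries above the diagonal are exactly the vanishing entries $(A_n)_{ij}$ with $j>i+1$. Its determinant is $\prod_{i=k}^{n-1}b_i$ either way, so nothing downstream changes.
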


\begin{proof}
The proof is by mathematical induction on $n$.
The case $n=1$ is obvious as $\pol _{1}^{h}\left( z\right) =z$.
Suppose now $n\geq 2$.
We use
the induction hypothesis that
$\pol _{j}^{h}\left( z\right) =\det \left( zH_{j}^{-1}L_{j}-U_{j}\right) $
for all $j<n$.

The matrix $zH_{n}^{-1}L_{n}-U_{n}$ has the following form as determinant
\[
\left|
\begin{array}{ccccc}
zh\left( 1\right) ^{-1}\sigma \left( 1\right) &-1
&0
&\cdots &0\\
zh\left( 2\right) ^{-1}\sigma \left( 2\right)
&zh\left( 2\right) ^{-1}\sigma \left( 1\right) &-1
&\ddots &0\\
\vdots &\vdots
&\ddots &\ddots &\vdots \\
zh\left( n-1\right) ^{-1}\sigma \left( n-1\right)
&zh\left( n-1\right) ^{-1}\sigma \left( n-2\right) &\cdots
&zh\left( n-1\right) ^{-1}\sigma \left( 1\right) &-1\\
zh\left( n\right) ^{-1}\sigma \left( n\right)
&zh\left( n\right) ^{-1}\sigma \left( n-1\right) &\cdots
&zh\left( n\right) ^{-1}\sigma \left( 2\right)
&zh\left( n\right) ^{-1}\sigma \left( 1\right)
\end{array}
\right| .
\]
We expand this with respect to the last row. So we have to compute the
determinant of the matrices obtained by
deleting
the last row and
the $k$th
column. This results in a matrix determinant
\[
\left|
\begin{array}{ccccc}
zh\left( 1\right) ^{-1}\sigma \left( 1\right) &-1
&0&\cdots
&0\\
zh\left( 2\right) ^{-1}\sigma \left( 2\right)
&zh\left( 2\right) ^{-1}\sigma \left( 1\right)
&-1&\ddots
&0\\
\vdots &\vdots
&\ddots
&\ddots &\vdots \\
zh\left( n-2\right) ^{-1}\sigma \left( n-2\right)
&zh\left( n-2\right) ^{-1}\sigma \left( n-3\right) &\cdots
&-1&0\\
zh\left( n-1\right) ^{-1}\sigma \left( n-1\right)
&zh\left( n-1\right) ^{-1}\sigma \left( n-2\right) &\cdots
&zh\left( n-1\right) ^{-1}\sigma \left( 1\right) &-1
\end{array}
\right| .
\]
Since we deleted column $k$
the
$\left( k-1\right) \times \left( n-k\right) $ block in the upper right corner
contains only zeros, i.~e.\ in rows $1$ to $k-1$ and (original) columns $k+1$ to
$n$. Therefore, we can compute the determinant as the product of the upper
left block of size $\left( k-1\right) \times \left( k-1\right) $ and the
lower right block of size $\left( n-k\right) \times \left( n-k\right) $.
Note that this lower right
block is a lower triangular matrix with $-1$ on the
diagonal so the determinant is $\left( -1\right) ^{n-k}$. The upper left
block is exactly the matrix $zH_{k-1}^{-1}L_{k-1}-U_{k-1}$. Therefore, by
expansion of the determinant
\begin{eqnarray*}
&&\det \left( zH_{n}^{-1}L_{n}-U_{n}\right) \\
&=&%\left( -1\right) ^{n+1}
\frac{z}{h\left( n\right)
}\sigma \left( n\right)
%\left( -1\right) ^{n-1}%\\
%%&&{}
+\sum _{k=2}^{n}
%%\left( -1\right) ^{n+k}
\frac{z}{h\left( n\right)
}\sigma \left(
n+1-k\right) \det \left( zH_{k-1}^{-1}L_{k-1}-U_{k-1}\right) 
%%\left( -1\right)^{n-k}
\\
&=&\frac{z}{h\left( n\right)
}\sigma \left( n\right) +\sum _{k=2}^{n}\frac{z}{h\left(
n\right)
}\sigma \left( n+1-k\right) \pol _{k-1}^{h}\left( z\right) \\
&=&\frac{z}{h\left( n\right) }\left( \sigma \left( n\right) +\sum _{j
=1}^{n-1}\sigma \left( j
\right) \pol _{n-j
}^{h}\left( z\right) \right)
=
\pol _{n}^{h}\left( z\right) .
\end{eqnarray*}
\end{proof}
%%%%%%%%%%%%%%%%%%%%%%%%%%%%%%%%%%%%%%%%%%%%%%%%%%%%%%%%%%%%%%%%%%%%%%%

\section{\label{a:hessbew}Proof of Theorem \ref{th:Hessenberg}}
The preceding determinant identity leads to the following eigenvalue representation.
\begin{equation}
\pol _n^h(z)=\det\left(H_n\right)^{-1}  \,\,\det \left(z I_n - A_n^h \right), \text{ where } A_n^h = L_n^{-1} H_n U_n.  \label{det:formula}
\end{equation}
\subsection{The inverse of $L_n$}
To proceed we determine the entries of $L_n^{-1}$. Therefore, the sequence $\left(
c_m\right)
_{m \geq 0}$ defined in (\ref{cm}) is also given by the coefficients
of the reciprocal of $$S\left( t\right) :=\sum _{m
=0}^{\infty }\sigma \left( m +1 \right) \left( -t\right) ^{m}.$$
Note this power series is regular at $t=0$.
\begin{equation*}
\sum_{m=0}^{\infty} c_m \, t^m = \frac{1}{S(t)}.
\end{equation*}

\begin{proposition}
We obtain
\[
L_n^{-1}=
\begin{pmatrix}
1 & 0 & 0 & \cdots & 0 \\
-c_1 & 1 & 0 & \cdots & 0 \\
c_2 & -c_1 & 1 & \ddots & \vdots \\
\vdots & \vdots & \ddots & \ddots & 0 \\
(-1)^{n-1}c_{n-1} & (-1)^{n-2}c_{n-2} & \cdots & -c_1 & 1
\end{pmatrix}.
\]
\end{proposition}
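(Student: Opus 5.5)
The plan is to use the algebra of lower-triangular Toeplitz matrices. Such a matrix is determined by its first column, and multiplying two of them corresponds to multiplying truncated power series in $\mathbb{R}[t]/(t^n)$. Under this correspondence, $L_n$ corresponds to the truncation of $\sum_{m=0}^{n-1}\sigma(m+1)t^m$. The inverse of $L_n$ is therefore the lower-triangular Toeplitz matrix whose first column lists the first $n$ coefficients of the reciprocal series $1/\sum_{m\ge0}\sigma(m+1)t^m$; this series is invertible because its constant term is $\sigma(1)=1$.

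The key step is to express these coefficients through $S$. Since
\[
\sum_{m\ge0}\sigma(m+1)t^m=S(-t),
\]
we have $1/S(-t)=\sum_{m\ge0}c_m(-t)^m=\sum_{m\ge0}(-1)^m c_m t^m$. So the $(i,j)$ entry of $L_n^{-1}$ for $i\ge j$ should be $(-1)^{i-j}c_{i-j}$, which is exactly the displayed matrix with $1=c_0$ on the diagonal, $-c_1$ below it, $c_2$ next, and so on.

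It remains to verify that the sequence defined recursively in (\ref{cm}) really gives the coefficients of $1/S(t)$. Write $S(t)=\sum_m s_m t^m$ with $s_m=(-1)^m\sigma(m+1)$. The identity $S(t)\cdot\sum_k c_k t^k=1$ is equivalent to $c_0=1$ together with, for $n\ge1$,
\[
c_n=-\sum_{k=0}^{n-1}s_{n-k}c_k=\sum_{k=0}^{n-1}(-1)^{n-k+1}\sigma(n-k+1)c_k,
\]
because $s_0=\sigma(1)=1$. This matches (\ref{cm}), so uniqueness of the reciprocal power series gives the claim.

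As an alternative that avoids power series language, one can check $L_nX=I_n$ directly, where $X$ is the proposed matrix. The $(i,j)$ entry of the product is $\sum_{l=j}^{i}\sigma(i-l+1)(-1)^{l-j}c_{l-j}$. Putting $m=i-j$ and $k=l-j$, this becomes $\sum_{k=0}^{m}\sigma(m-k+1)(-1)^k c_k$. It equals $1$ when $m=0$. For $m\ge1$, multiplying by $(-1)^m$ turns it into the recursion (\ref{cm}) rearranged, so it vanishes.

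The only real obstacle is sign bookkeeping: matching $(-1)^{n-k+1}$ in (\ref{cm}) with the alternating signs in $S$ and in the matrix. I would check this on the case $m=1$, where $c_1=\sigma(2)=3$ and the $(2,1)$ entry of the product is $\sigma(2)-c_1=0$, as it should be.
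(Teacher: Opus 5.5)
Your proposal is correct and is essentially the paper's argument: the paper also derives from $C(t)S(t)=1$ the vanishing of $\sum_{k=0}^{m}\sigma(m-k+1)(-1)^{k}c_k$ for $m\ge 1$, and then checks $L_n X=I_n$ entrywise, exactly as in your alternative paragraph; your correspondence between lower-triangular Toeplitz matrices and $\mathbb{R}[t]/(t^n)$ only repackages that computation. One small gain in your write-up is that you explicitly verify that the recursion (\ref{cm}) produces the coefficients of $1/S(t)$, which the paper only asserts before the proposition.
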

%%%%%%%%%%%%%%%%%%%%%%%%%%%%%%%%%%%%%%%%%%%%%%%%%%%%%%%%%%%%%%

%%%%%%%%%%%%%%%%%%%%%%%%%%%%%%%%%%%%%%%%%%%%%%%%%%%%%%%%%%%%%%
\begin{proof}
%%From (\ref{cm}) follows
We have
\[
1
=
\left( \sum _{m=0}^{\infty }c_{m}t^{m}\right)
\left( \sum _{m=0}^{\infty }\sigma \left( m+1\right) \left( -t\right) ^{m}\right)
=
\sum _{k=0}^{\infty }\sum _{m=0}^{k}\left( -1\right) ^{k-m}\sigma \left(
k-m+1\right) c_{m}t^{k}
.
\]
By comparison of coefficients we obtain for $k\geq 1$ that
\[
0=
\sum _{m=0}^{k}\sigma \left( k-m+1\right)
\left( -1\right) ^{k-m}c_{m}
.
\]
We define a
matrix via the coefficients
$$\left( \left( -1\right) ^{k
-m}c_{k-m}\right) _{k,m=1,2,\ldots ,n}.$$
where $c_{j}=0$ if $j\leq -1$. If we now compute the product of $L_{n}$ with
this matrix we obtain as entry in the $i$th row and $m$th column when $i>m$
that
$$\sum _{k=m}^{i}\sigma \left( i-k+1\right) \left( -1\right) ^{k
-m}c_{k-m}
=\sum _{k=0}^{i-m}\sigma \left( i-m-k+1\right) \left( -1\right) ^{k
}c_{k
}=0.$$
If $i=m$ this results in $1$ and for $i<m$ it is $0$. Therefore, it is the
inverse matrix. 
\end{proof}
\subsection{Positivity result}
\begin{proposition}
\label{prop:pos}The sequence $\left(
c_m\right)
_m$ 
satisfies $c_m >0$ for $m \geq 0$.
\end{proposition}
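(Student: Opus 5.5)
The plan is to prove $c_m>0$ analytically: read off the dominant singularity of the generating function $1/S(t)$, show that it is a simple pole on the positive real axis, and control everything else by a contour estimate. Small indices are then checked by direct computation from \eqref{cm}.

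\emph{Step 1: rewrite $S$.} Since $\sum_{k\ge1}\sigma(k)x^k=\sum_{n\ge1} n x^n/(1-x^n)$, setting $x=-t$ gives
\[
S(t)=\sum_{n\ge1}\frac{n(-t)^{n-1}}{1-(-t)^n},\qquad |t|<1 .
\]
So $1/S(t)$ is meromorphic in the unit disc, and its poles are the zeros of $S$. For positivity of $c_m$ we want the zero of smallest modulus to lie on the \emph{positive} real axis. That is natural here: on $t>0$ the series alternates, with leading terms $1-3t+4t^2-7t^3+6t^4-\cdots$, so it changes sign. For example, $1-3t+4t^2-7t^3$ already becomes small near $t\approx 0.4$.

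\emph{Step 2: locate the dominant zero.} I would show that $S$ has exactly one zero $t_0$ in a disc $|t|<r$, with $t_0\in(0,r)$ real and simple and $r$ somewhat larger than $t_0$. For existence, a sign change of $S$ on $(0,r)$ suffices, using explicit tail bounds $|\sum_{n>N}\cdots|\le\sum_{n>N}n r^{n-1}/(1-r^n)$. For uniqueness in the disc, apply Rouché's theorem comparing $S$ with a suitable polynomial truncation $S_N$ (or argue directly with the argument principle) on $|t|=r$; the zeros of $S_N$ are found numerically with certified bounds.

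\emph{Step 3: contour estimate.} By the residue theorem,
\[
c_m=\frac{-1}{S'(t_0)\,t_0^{m+1}}+\frac{1}{2\pi i}\oint_{|t|=r}\frac{dt}{S(t)\,t^{m+1}} .
\]
The main term is positive, since $S$ decreases through zero at $t_0$ and so $S'(t_0)<0$. The error term is at most $r^{-m}/\min_{|t|=r}|S(t)|$. Hence $c_m>0$ as soon as
\[
(t_0/r)^{m+1}<\frac{r\,|S'(t_0)|^{-1}}{\max_{|t|=r}|1/S(t)|},
\]
which holds for all $m\ge m_0$ with an explicit $m_0$. For $0\le m<m_0$, positivity is verified directly from the recursion \eqref{cm}: $c_0=1$, $c_1=3$, $c_2=5$, and so on.

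\emph{Main obstacle.} The hard part is the rigorous lower bound for $|S(t)|$ on the circle $|t|=r$, together with the choice of $r$. The radius must be large enough that $t_0/r$ gives a manageable $m_0$, but small enough that no other zero of $S$ (in particular complex ones near the negative axis, where the terms $n(-t)^{n-1}$ reinforce) lies inside. I would first try $r$ around $0.6$. I would bound $|S_N(t)|$ from below on a fine mesh of the circle, with a Lipschitz bound between mesh points, and add the tail bound. If that margin is too thin, I would fall back to a smaller $r$ and accept a larger finite range of directly checked $m$.
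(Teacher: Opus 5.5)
Your proposal is correct and is essentially the paper's own argument (Appendix~\ref{a:positiv}): truncate $S$, use Rouch\'e's theorem on a circle to isolate a unique simple real zero $t_0$ with $S'(t_0)<0$, split off the pole to get the dominant positive term $-t_0^{-m-1}/S'(t_0)$, bound the remainder by a Cauchy-type estimate, and check finitely many $c_m$ directly. The paper works with $r=1/2$ and $S_{19}$ and checks $m\le 20$ by hand; your direct contour bound with $r\approx 0.6$ looks workable too, since numerically the next zeros of $S$ lie somewhat outside $|t|=0.6$ near $\arg t\approx\pm 60^\circ$ (not near the negative axis, where all terms of $S$ are positive), and your threshold should read $(t_0/r)^{m+1}<\bigl(r\,|S'(t_0)|\max_{|t|=r}|1/S(t)|\bigr)^{-1}$, although your version is stronger because $r<1$ and so still suffices.
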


Gandhi \cite{Ga69} indicated a combinatorial argument for the positivity of the coefficients $c_m$
defined in Proposition \ref{prop:pos}.
However, the argument is only sketched there and does not appear to contain all details needed for the present application. 
We give a self-contained analytic proof of the positivity of the coefficients $c_m$. The argument combines elementary one-variable complex analysis with a meromorphic singularity analysis of the reciprocal generating function $1/S(t)$. The complex-analytic tools used below are 
Rouch\'e's theorem, residues, and Cauchy's coefficient formula; see Remmert \cite{Re91} for background. For the generating-function viewpoint, especially the principle that the nearest singularity controls the coefficients of a meromorphic generating function, see Flajolet and Sedgewick \cite{FS09}. For the convenience of the reader
we give an overview of the analytic proof and put the details in Appendix~\ref{a:positiv}. An independent combinatorial solution to Challenge~\ref{cha:structural-positivity}, due to Ken Ono, is included as Appendix~\ref{app:ono-challenge3}.

\subsubsection{Short summary of the proof strategy}
The proof starts from the reciprocal generating function
\[
  \sum_{m\geq 0} c_m t^m = \frac{1}{S(t)},
  \qquad
  S(t)=\sum_{m\geq 0} \sigma(m+1)(-t)^m .
\]
The aim is to prove that all coefficients $c_m$ are positive. The main steps are as follows.

\begin{enumerate}[label=\textup{(\arabic*)}, leftmargin=2.2em]
  \item First, the infinite series $S(t)$ is approximated by a finite truncation $S_{\zerlegung }\left( t\right)
  $, with explicit error estimates for $S(t)-S_{\zerlegung }\left( t\right)
  $ and its derivative on the disk $|t|\leq 1/2$.

  \item These estimates are used together with a finite computation on the circle $|t|=1/2$ to prove that $S(t)$ does not become too small on the boundary. Rouche's theorem then shows that $S(t)$ and the chosen truncation have the same number of zeros in $|t|<1/2$.

  \item One obtains a unique zero $t_0$ of $S(t)$ in the disk $|t|<1/2$. The proof also gives explicit bounds showing that $t_0$ is real, lies in a short positive interval, and is simple. Consequently $1/S(t)$ has a simple pole at $t_0$.

  \item The reciprocal is then decomposed as
  \[
    \frac{1}{S(t)}=\frac{\residuum }{t-t_0}+\hilfsftn (t),
    \qquad
    \residuum =\func{Res}{}_{t=t_0}\frac{1}{S(t)}=\frac{1}{S'(t_0)},
  \]
  where $\hilfsftn (t)$ is holomorphic in a disk larger than $
t_{0}$. Since $S'(t_0)<0$, one has $\residuum <0$.

  \item Comparing coefficients yields
  \[
    c_m = -\residuum \, t_0^{-m-1} + \omega _m,
  \]
  where $\omega _m$, $m\geq 0$, are
  the coefficients of $\hilfsftn (t)$. Cauchy's estimate bounds $\omega _m$, while the pole contribution $-\residuum \,t_0^{-m-1}$ is positive and dominates for all sufficiently large $m$.

  \item The remaining finitely many coefficients are checked directly. This proves $c_m>0$ for every $m\geq 0$.
\end{enumerate}

\subsection{The matrix $M_n^h$}

A modification of the formula (\ref{det:formula})
leads to
\begin{equation*} %\label{det:formula}
\pol _n^h(z) =
\det\left(H_n\right)^{-1}  \,\,\det \left(z I_n +%- (-1)
B_n^h  \right),
\end{equation*}
where
\begin{equation*}
B_n^{h}:= - D_n \, A_n^{h} D_n
\end{equation*}
and
\[
B_n^{h}=
\begin{pmatrix}
0 & 1 & 0 & 0 & \cdots & 0\\
0 & c_1 & h(2) & 0 & \cdots & 0\\
0 & c_2 & h(2)c_1 & h(3) & \ddots & \vdots \\
0 & c_3 & h(2)c_2 & h(3)c_1 & \ddots & 0 \\
\vdots & \vdots & \vdots & \vdots & \ddots & h(n-1)\\
0 & c_{n-1} & h(2)c_{n-2} & h(3)c_{n-3} & \cdots & h(n-1)\, c_1
\end{pmatrix}.
\]
Note that the matrix $B_n^{h}$ is not primitive. Nevertheless, we directly obtain:
\begin{equation*}
\label{identity}
\pol _n^h(-z) =
\det\left(H_n\right)^{-1}  \,\, (-1)^n z \det \left(z I_{n-1} - 
M_n^h \right),
\end{equation*}
where
\begin{equation} \label{Mn}
M_n^h=
\begin{pmatrix}
c_1 & h(2) & 0 & \cdots & 0\\
c_2 & h(2)c_1 & h(3) & \cdots & 0\\
c_3 & h(2)c_2 & h(3)c_1 & \ddots & \vdots\\
\vdots & \vdots & \vdots & \ddots & h(n-1)\\
c_{n-1} & h(2)c_{n-2} & h(3)c_{n-3} & \cdots & h(n-1)c_1
\end{pmatrix}.
\end{equation}
The matrix 
$M_n^h$
has nonnegative entries.

Therefore, Theorem \ref{th:Hessenberg} is proven.
Before we proceed we recall some results from the Perron--Frobenius theory.
%%%%%%%%%%%%%%%%%%%%%%%%%%%%%%%%%%%%%%%%%%%%%%%%%%%%%%%%%%%%%%%%%%%%%%
%%%%%%%%%%%%%%%%%%%%%%%%%%%%%%%%%%%%%%%%%%%%%%%%%%%%%%%%%%%%%%%%%%%%%%

\section{\label{a:pf}Perron--Frobenius Theory} %%%%%%stop

The following result is due to Perron and Frobenius.
\begin{theorem}\label{th:PF}
Let $ A \in \mathbb{R}^{d \times d}$ be primitive and let \(\rho=\rho(A)\) denote its
spectral radius.  Then \(\rho>0\), \(\rho\) is an algebraically simple
eigenvalue of \(A\), and every other eigenvalue \(\beta \) of \(A\) satisfies
\(|\beta |<\rho\).  Moreover, there are uniquely determined vectors
\(x,y>0\), normalized by
\[
   \sum_{k=1}^{d}x_k=1, \qquad y^T x=1,
\]
such that
\[
   Ax=\rho x, \qquad y^T A=\rho y^T.
\]
With this normalization,
\[
   \lim_{m\to\infty} \rho^{-m}A^m=xy^T.
\]
\end{theorem}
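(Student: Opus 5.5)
We plan to deduce the statement from Perron's theorem for \emph{strictly positive} matrices, applied to a power of $A$. Choose $k\ge 1$ with $B:=A^k>0$. The spectra are related by $\operatorname{spec}(B)=\{\lambda^k:\lambda\in\operatorname{spec}(A)\}$, and algebraic multiplicities add: the multiplicity of $\mu$ for $B$ is the sum of the multiplicities of all $\lambda$ with $\lambda^k=\mu$ for $A$. This can be read off from a Jordan (or Schur triangular) form of $A$. In particular $\rho(B)=\rho(A)^k$.

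\emph{Step 1 (positive matrices).} We first prove the theorem for $B>0$ via the Collatz--Wielandt function
\[
r(x)=\min_{x_i>0}\frac{(Bx)_i}{x_i}
\]
on the simplex of nonnegative vectors with $\sum x_i=1$. Its supremum $r$ is attained; using $B\cdot(\text{simplex})$ if needed gives compactness. Let $x$ be a maximizer. If $Bx-rx\ge 0$ were nonzero, then $B(Bx-rx)>0$, so $r(Bx)>r$, a contradiction. Hence $Bx=rx$, and $x=r^{-1}Bx>0$.

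Now let $Bv=\mu v$ with $|\mu|\ge r$. The triangle inequality gives $B|v|\ge|\mu||v|$, so $|\mu|\le r(|v|)\le r$, and therefore $|\mu|=r$ and $r=\rho(B)$. By the same argument $|v|$ is an eigenvector. Equality in $\big|\sum_j b_{ij}v_j\big|=\sum_j b_{ij}|v_j|$ with all $b_{ij}>0$ forces all $v_j$ to have a common phase. Thus $v$ is a multiple of a positive vector and $\mu=r$. Geometric simplicity follows because, if $v,x>0$ were independent $r$-eigenvectors, a suitable combination $x-tv$ would be nonnegative with a zero entry, contradicting positivity of eigenvectors of $B$. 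For algebraic simplicity, take the positive left eigenvector $y$ obtained by applying the same argument to $B^T$. If $(B-r)w=x$, then $0=y^T(B-r)w=y^Tx>0$, a contradiction, so there is no Jordan chain.

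\emph{Step 2 (transfer to $A$).} Since $AB=BA$, $A$ maps the one-dimensional eigenspace $\ker(B-r)$ to itself. Hence $Ax=\rho' x$ with $x>0$. A primitive matrix has no zero row, so $Ax\neq 0$ is nonnegative, and therefore $\rho'>0$ with $\rho'^k=r$, i.e.\ $\rho'=\rho(A)=:\rho$. If $\beta\in\operatorname{spec}(A)$ satisfies $|\beta|=\rho$, then $\beta^k$ is an eigenvalue of $B$ of modulus $r$, so $\beta^k=r$ by Step 1. Because $r$ has multiplicity one for $B$, the additivity of multiplicities shows that exactly one eigenvalue of $A$, counted with multiplicity, has $k$-th power $r$. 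That eigenvalue is $\rho$, so $\beta=\rho$ and $\rho$ is algebraically simple. The left vector $y>0$ is obtained likewise from $A^T$, which is also primitive. Uniqueness of the normalized $x$ and $y$ follows from simplicity, and $y^Tx>0$ allows the normalization $y^Tx=1$.

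\emph{Step 3 (limit).} Set $P=xy^T$. Then $P^2=P$ and $AP=PA=\rho P$. Hence $A^m=\rho^mP+(A-\rho P)^m$, using $(A-\rho P)P=0=P(A-\rho P)$. On $\operatorname{range}(I-P)$ the spectrum of $A-\rho P$ consists of the remaining eigenvalues of $A$, while on $\operatorname{range}(P)$ it is $0$. So $\rho(A-\rho P)<\rho$, and Gelfand's formula gives $\rho^{-m}(A-\rho P)^m\to 0$. Therefore $\rho^{-m}A^m\to xy^T$.

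The main obstacle is Step 1, namely the strict inequality $|\mu|<r$ for $\mu\neq r$ and the algebraic simplicity. These rely on the equality case of the triangle inequality, which is exactly where strict positivity of $B=A^k$ is essential. Step 2 is bookkeeping of multiplicities under $\lambda\mapsto\lambda^k$.
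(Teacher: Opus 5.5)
The paper gives no proof of Theorem~\ref{th:PF}. It quotes the result as the classical Perron--Frobenius theorem and refers to Seneta (Theorem~1.1) and Horn--Johnson (Section~8.5). Your proposal is a correct, self-contained proof along the standard textbook route:
\begin{itemize}
\item Perron's theorem for the strictly positive matrix $B=A^k$, via Collatz--Wielandt maximization and the equality case of the triangle inequality;
\item transfer to $A$, using that $A$ preserves the one-dimensional space $\ker(B-r)$ and that algebraic multiplicities add under $\lambda\mapsto\lambda^k$;
\item the limit, via the idempotent $P=xy^T$, the splitting $A^m=\rho^mP+(A-\rho P)^m$, and $\rho(A-\rho P)<\rho$ combined with Gelfand's formula.
\end{itemize}

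The paper's citation buys brevity. Your version makes visible exactly which conclusions the paper relies on later. These are the positive left eigenvector used in Lemma~\ref{comparison}, and algebraic simplicity, which gives simplicity of the dominant zero in Theorem~\ref{th:Darcais}. Your version also isolates the one place where primitivity, rather than mere irreducibility, is needed: the existence of a power $A^k>0$ in Step~2. For the matrices $M_n^h$ this power is explicit, $k=n-2$, by Lemma~\ref{lem:primitive-Mnh}.

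If you write it out in full, state three small points explicitly.
\begin{itemize}
\item $r\ge r(u)=\min_i\sum_j b_{ij}>0$ for the uniform vector $u$. You need this for $x=r^{-1}Bx>0$.
\item The common-phase conclusion requires $v_j\neq 0$ for all $j$. This holds because $|v|$ is a nonnegative eigenvector of $B>0$ for $r>0$, and hence positive.
\item In Step~3, $\operatorname{range}(I-P)=\ker y^T$ is $A$-invariant since $AP=PA$. So $A-\rho P$ acts there as $A$, with characteristic polynomial $\det(zI-A)/(z-\rho)$.
\end{itemize}
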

Perron's original theorem treated the case of positive matrices.  The
extension to nonnegative matrices, and in particular to primitive matrices,
is due to Frobenius; see, for example,
Seneta \cite[Theorem~1.1]{Se06} and
Horn--Johnson \cite[Section~8.5]{HJ12}.

%%%%%%%%%%%%%%%%%%%%%%%%%%%%%%%%%%%%%%%%%%%%%%%%%%%%%%%%%%%%%%%%%%%%%%%%%%%%%%%
\begin{lemma}[Strict Perron--Frobenius comparison] \label{comparison}
Let $A$ be primitive. Let $ y \in \mathbb{R}^n$ be
positive and $\beta \in \mathbb{R}$. Suppose 
\begin{equation*}
A \, y \geq \beta \, y
\end{equation*}
and at least one component of $Ay-\beta \, y$ is strictly positive. Let $\rho %_
\left( A\right) $ be the spectral radius of $A$, then we have
\begin{equation*}
\rho %_
\left( A \right) >\beta.
\end{equation*}
\end{lemma}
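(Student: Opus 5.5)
The plan is to test the hypothesis against the left Perron vector of $A$. Theorem~\ref{th:PF} applies because $A$ is primitive. It supplies a vector $w>0$ with $w^T A=\rho(A)\,w^T$, where $\rho(A)>0$. Since $w$ is strictly positive, the sign of a nonnegative vector is preserved when it is paired with $w$. This is the only ingredient needed.

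Set $v:=Ay-\beta y$. By hypothesis $v\ge 0$ and $v_k>0$ for at least one index $k$. Because every component of $w$ is positive, we get
\[
w^T v=\sum_{i} w_i v_i \ \ge\ w_k v_k>0 .
\]
Using the left eigenvector relation, we also have
\[
w^T v=w^T A y-\beta\, w^T y=\rho(A)\,w^T y-\beta\, w^T y=(\rho(A)-\beta)\,w^T y .
\]
The factor $w^T y$ is strictly positive, since both $w$ and $y$ are positive vectors. Combining the two displays gives $(\rho(A)-\beta)\,w^Ty>0$, and hence $\rho(A)>\beta$.

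The argument has no real obstacle. The one point to watch is that strictness relies on the left Perron vector being \emph{strictly} positive, not merely nonnegative. This is exactly where primitivity (or at least irreducibility) of $A$ enters, through Theorem~\ref{th:PF}. For a merely nonnegative $A$, the left eigenvector might vanish precisely at the index $k$ where $v_k>0$, and the conclusion could fail. No assumption on the sign of $\beta$ is needed, because only the sign of $\rho(A)-\beta$ is extracted. Later, in Section~\ref{a:mono}, I would apply the lemma with $A=M_{n+1}^h$, $\beta=\rho_n^h$, and $y$ built from the positive Perron vector of $M_n^h$, extended by a suitable positive last entry.
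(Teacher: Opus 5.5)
Your proof is correct and matches the paper's argument: the paper also pairs the inequality with the strictly positive left Perron vector $x$ of $A$ (so that $A^Tx=\rho(A)x$) supplied by Theorem~\ref{th:PF}. It then obtains $\rho(A)\,x^Ty=x^TAy>\beta\, x^Ty$ and divides by $x^Ty>0$. You are right that primitivity enters exactly through the strict positivity of this left Perron vector.
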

%%%%%%%%%%%%%%%%%%%%%%%%%%%%%%%%%%%%%%%%%%%%%%%%%%%%%%%%%%%%%%%%%%%%%%%%%%%%%%%
\begin{proof}
By Perron--Frobenius theory there exists a vector $x>0$ such that
$A^Tx=\rho %_
\left( A\right) x$. Therefore
\[
\rho %_
\left( A\right) x^Ty=x^TAy.
\]
Since $Ay\ge \beta y$ and $Ay-\beta y$ has at least one positive
component, while $x>0$, we obtain
\[
x^TAy > \beta x^Ty.
\]
Thus
\[
\rho %_
\left( A\right) x^Ty > \beta x^Ty.
\]
Since $y>0$ and $x>0$, one has $x^Ty>0$, and hence $\rho
\left( A\right) >\beta$.

\end{proof}

%%%%%%%%%%%%%%%%%%%%%%%%%%%%%%%
%%%%%%%%%%%%%%%%%%%%%%%%%%%%%%%

%%%%%%%%%%%%%%%%%%%%%%%%%%%%%%%
%%%%%%%%%%%%%%%%%%%%%%%%%%%%%%%
%%%%%%%%%%%%%%%%%%%%%%%%%%%%%%%
\section{Proof of Theorem \ref{th:primitive}}
\label{th:frob}
From the entry formula (\ref{Mn}), $M_n^h$ is the leading principal submatrix of $M_{n+1}^h$. More precisely, let $n \geq 2$. Then
\begin{equation*} \label{decom}
 M_{n+1}^h=\begin{pmatrix}
 M_n^h &u_n\\
 v_n^T&c_1h(n)
 \end{pmatrix},
\end{equation*}
where
\[
 u_n=h(n)e_{n-1} \in \mathbb{R}^{n-1}, \,\, e_{n-1}=(0,\ldots,0,1)\in \mathbb{R}^{n-1},
\]
and
\begin{equation*}
 v_n^{T}
 =\bigl(c_nh(1),c_{n-1}h(2),\ldots,c_2h(n-1)\bigr) \in \mathbb{R}^{n-1}.
\end{equation*}
%Let $d_n=c_1 h(n)$.
Then we have
\begin{equation*} \label{decom vektor}
M_n^h \geq 0, \, u_n \geq 0, \, u_n \neq 0, \, v_n >0 \text{ and } %d_n
c_{1}h\left( n\right) >0.
\end{equation*}
%%%%%%%%%%%%%%%%%%%%%%%%%%%%%%%%%%%%%%%%% insert
\begin{lemma}\label{lem:primitive-Mnh}
Let $n\ge2$. Let $\left(
h(n)\right)
_{n\geq 1}$ be a normalized sequence of positive
real numbers. Further, by Proposition \ref{prop:pos}, $c_m>0$ for all
$m\ge0$. Then $M_n^h$ is primitive. For $n=2$ already $M_2^h>0$, and for
$n\ge3$ one has
\[
   (M_n^h)^{n-2}>0.
\]
Moreover, for $n\ge3$, the exponent $n-2$ is sharp.
\end{lemma}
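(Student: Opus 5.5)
The plan is to argue purely from the zero pattern of $M_n^h$, using Proposition~\ref{prop:pos} only to know that every entry on or below the first superdiagonal is strictly positive. Write $d=n-1$ and $A=M_n^h$. Then $A_{ij}>0$ exactly when $j\le i+1$, and $A_{ij}=0$ when $j\ge i+2$. For $n=2$ the matrix is the $1\times1$ matrix $(c_1)$ with $c_1>0$, so it is positive. For $n\ge3$, primitivity will follow once we prove $A^{n-2}=A^{d-1}>0$.

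\textbf{Positivity of the power.} I would view $A$ as the adjacency pattern of a directed graph on $\{1,\dots,d\}$ with an edge $i\to j$ whenever $j\le i+1$. The $(i,j)$ entry of $A^k$ is a sum of products of entries along walks of length $k$ from $i$ to $j$. All such products are nonnegative, and a product is positive precisely when every step is an edge. It therefore suffices to show that for every pair $(i,j)$ there is a walk of length exactly $k=d-1$ from $i$ to $j$. In this graph each step either moves up by one index, stays put (loops exist since $A_{ii}>0$), or jumps down arbitrarily.

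\textbf{Constructing the walks.} If $j\ge i$, climb from $i$ to $j$ in $j-i\le d-1$ steps and pad the remaining steps with loops at $j$. If $j<i$, first jump from $i$ down to $j$ in one step, then loop $d-2\ge 0$ times. A more uniform alternative is to note that $A^k_{ij}>0$ iff $j\le i+k$, which follows by induction on $k$ using loops and downward jumps. With $k=d-1$ and $i\ge1$ we have $i+k\ge d\ge j$, so every entry is positive.

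\textbf{Sharpness.} By the same characterization, $(A^{d-2})_{1,d}$ would need a walk from $1$ to $d$ of length $d-2$. Since indices increase by at most one per step, such a walk cannot exist. Hence $(A^{d-2})_{1d}=0$, i.e.\ $(M_n^h)^{n-3}$ is not positive; for $n=3$ this reads $(M_3^h)^0=I_2$, which is not positive. The only real point needing care is making the ``iff'' characterization rigorous. I expect no serious obstacle: the positivity of the subdiagonal and diagonal entries is exactly what Proposition~\ref{prop:pos} supplies.
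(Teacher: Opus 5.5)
Your proposal is correct and follows essentially the paper's route. The paper proves by induction on $\nu\ge1$ that $((M_n^h)^{\nu})_{ij}>0$ exactly when $j-i\le\nu$ (using the intermediate index $\ell=\max\{1,j-1\}$), which is your walk characterization in matrix language; it then gets $(M_n^h)^{n-2}>0$ and sharpness from the vanishing $(1,n-1)$ entry, just as your ``at most one step up per step'' argument does, and that argument covers every exponent below $n-2$, not only $n-3$.
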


\begin{proof}
By the explicit formula for $M_n^h$, together with $c_m>0$ and $h(j)>0$,
we have
\[
   (M_n^h)_{ij}>0 \quad\text{if } j-i\le 1,
   \qquad
   (M_n^h)_{ij}=0 \quad\text{if } j-i>1 .
\]
Thus $M_n^h$ is positive on and below
the first
superdiagonal, and zero strictly above the first superdiagonal.

We claim that, for every $\Mnhexp
\ge1$,
\[
   \bigl((M_n^h)^{\Mnhexp }
\bigr)_{ij}>0
   \quad\text{if } j-i\le \Mnhexp ,
   \qquad
   \bigl((M_n^h)^{\Mnhexp }
\bigr)_{ij}=0
   \quad\text{if } j-i>\Mnhexp .
\]
The claim follows by induction on $\Mnhexp $. The case $\Mnhexp
=1$ is exactly the sign
pattern above. Suppose the assertion is true for some $\Mnhexp \ge1$. Then
\[
   \bigl((M_n^h)^{\Mnhexp
+1}\bigr)_{ij}
   =
   \sum_{\ell=1}^{n-1}
   \bigl((M_n^h)^{\Mnhexp }\bigr)_{i\ell}(M_n^h)_{\ell j}.
\]
A nonzero summand can occur only if
\[
   \ell-i\le \Mnhexp
,
   \qquad
   j-\ell\le 1.
\]
Hence a nonzero summand can occur only when $j-i\le \Mnhexp
+1$, which proves the
vanishing for $j-i>\Mnhexp
+1$. Conversely, if $j-i\le \Mnhexp
+1$, choose
$\ell=\max\{1,j-1\}$. Then $1\le\ell\le n-1$, $\ell-i\le \Mnhexp
$, and
$j-\ell\le1$. Therefore,
the corresponding summand is strictly positive,
and all summands are nonnegative. Hence
\[
   \bigl((M_n^h)^{\Mnhexp
+1}\bigr)_{ij}>0.
\]
This proves the induction.

The matrix $M_n^h$ has size $n-1$, so the largest possible value of
$j-i$ is $n-2$. Therefore, for $n\ge3$,
\[
   (M_n^h)^{n-2}>0.
\]
For $n=2$, the matrix $M_2^h$ is a positive $1\times1$ matrix. Thus
$M_n^h$ is primitive for every $n\ge2$.

Finally, if $n\ge3$ and $\Mnhexp
<n-2$, then the $(1,n-1)$-entry of
$(M_n^h)^{\Mnhexp }$ is zero by the sign pattern just proved, since
$(n-1)-1=n-2>\Mnhexp
$. Hence the exponent $n-2$ is sharp.
\end{proof}
\subsection{Final steps in the proof of Theorem \ref{th:primitive}}

By Lemma~\ref{lem:primitive-Mnh}, the matrix $M_n^h$ is primitive. The remaining
assertions follow directly from the Perron--Frobenius theorem for primitive
nonnegative matrices, stated as Theorem~\ref{th:PF}.

\section{Proofs of the
Dominant-Zero
Results}
\label{sec:proof-main}

\begin{proof}[Proof of Theorem~\ref{th:Darcais}]
By Theorem~\ref{th:Hessenberg}, the nonzero zeros of $\pol _n^h(-z)$ are precisely
the eigenvalues of $M_n^h$, counted with algebraic multiplicity. Since $M_n^h$
is primitive and nonnegative, Perron--Frobenius theory implies that there is a
unique eigenvalue on the spectral circle, namely $\rho_n^h=\rho(M_n^h)>0$, and
that this eigenvalue is algebraically simple. Hence $\pol _n^h(-z)$ has a unique
zero of maximal modulus, namely $z=\rho_n^h$. Equivalently, $\pol _n^h(z)$ has the
unique dominant zero $-\rho_n^h$, which is real, negative, and simple. The zero
at the origin does not affect dominance, since $\rho_n^h>0$.
\end{proof}

%\section
\begin{proof}[Proof of Corollary \ref{NO}]
The D'Arcais polynomials provide the Nekrasov--Okounkov polynomials \cite{HN19}
by shifting
$z \mapsto z+1$:
\begin{equation*}
\sum_{n=0}^{\infty}\pol _n^{h_1}(z) \,q^n =
\prod_{n=1}^{\infty}
\left(  1 - q^n \right)^{-z}.
\end{equation*}
Although the shift from \(\pol _n^{h_1}\) to \(\nop _n\) changes the coefficients, it preserves the dominant-zero property in the present setting, by the triangle inequality.
Therefore,
\[
\nop _n(z)=\pol ^{h_1}_n(z+1).
\]
By Theorem~\ref{th:Darcais}, the polynomial $\pol _n^{h_1}(z)$ has a unique
zero $\alpha_n$ of maximal modulus. This zero is real, negative, and simple.
Write $\alpha_n=-\negmaxmod
_{n}$, with $\negmaxmod
_{n}>0$. If $\beta\neq \alpha_n$ is any other zero of $\pol ^{h_1}_n(z)$, then $
\left| \beta \right| <\negmaxmod _{n}$.
The zeros of $\nop _n(z)$ are precisely $\gamma=\beta-1$, where $\beta$ ranges over the zeros of $\pol ^{h_1}_n(z)$. The zero corresponding to $\alpha_n$ is
\[
\alpha_n-1=-\negmaxmod
_n-1,
\]
which is real and negative. Its modulus is $\negmaxmod
_n+1$. For every other zero $\beta-1$, the triangle inequality gives
\[
|\beta-1|\leq |\beta|+1<\negmaxmod
_{n}+1=|\alpha_n-1|.
\]
Hence $\nop _n(z)$ has a unique zero of maximum
modulus. It is real, negative, and simple. Equivalently, $\nop _n(-z)$ has the unique positive zero $
\negmaxmod _{n}+1$ of maximum
modulus. This proves the Corollary.
\end{proof}

\section{\label{a:mono}Proof of Theorem \ref{th:Mono}}
We apply
Lemma \ref{comparison}. We have that $M_{n}^{h}
\geq 0$ and primitive for all $n \geq 2
$. Therefore we can apply 
Theorem \ref{th:PF}.
There exist $x>0$ in $\mathbb{R}^{n-1}$ such that
\begin{equation*}
M_{n}^{h}x=\rho _{n}^{h} \, x,
\end{equation*}
where $\rho _{n}^{h}$ is the spectral radius of $M_{n}^{h}$.
We extend $x
$ by one positive valued component to
$
{x}_{\xi
}=\left(
\begin{array}{c}
x
\\
\xi
\end{array}
\right) $. 
%%%%%%%%%%%%%%%%%%%%%%%%%%%%%%%%%%%%%%%%
Then $$M_{n+1}^{h} \,
{x}_{\xi
}=\left(
\begin{array}{c}
M_{n}^{h}x
+\xi
u_{n} \\
v_{n}^{T}x
+\xi
c_{1}h\left( n\right)
\end{array}
\right).$$ Since $u_{n}\geq 0$ we have $M_{n}^{h}x
+\xi
u_{n}\geq \rho _{n}^{h} x
$.
If $\rho _{n}^{h} \leq c_{1}h\left( n\right) $ automatically
$$v_{n}^{T}x
+\xi
c_{1}h\left( n\right)
\geq \rho _{n}^{h}\xi
.$$
For $0<c_{1}h\left( n\right) <\rho _{n}^{h}$
we choose
$$0<\xi
<
\frac{v_{n}^{T} \, x}{\rho _{n}^{h}
-c_{1}h\left( n\right) }.$$
Since \(M_{n+1}^h x_{\xi
}\geq \rho_n^h x_{\xi
}\) and the inequality is
strict in at least one component, Lemma~5.2 gives
\[
\rho(M_{n+1}^h)>\rho_n^h.
\]
Thus \(\rho_{n+1}^h>\rho_n^h\), as claimed.

%%%%%%%%%%%%%%%%%%%%%%%%%%%%%%%%%%%%%%%%%%%%%%%%%%%%%%%%%
%%%%%%%%%%%%%%%%%%%%%%%%%%%%%%%%%%%%% Challenge
%%\section{Challenges}

%%%%%%%%%%%%%%%%%%%%%%%%%%%%%%%%%%%%%%%%%%%% Future
\section{\label{a:herausf}Future Perspectives and Challenges}

The results of this paper give an exact finite-dimensional Perron--Frobenius approach for the dominant zero of generalized D'Arcais polynomials and of the Nekrasov--Okounkov polynomials. Since the nonzero zeros are realized as the spectrum of the explicit primitive nonnegative Hessenberg matrix $M_n^h$, several natural questions about zero distributions become concrete spectral and structural questions about this matrix family.

\subsection{Simplicity beyond the Perron zero}

Theorem~2.1 proves that the dominant zero is real, negative, and simple. The natural next problem is whether this simplicity phenomenon extends to all zeros.

\begin{challenge}[Simplicity]
For $n\geq 1$, decide whether all zeros of the Nekrasov--Okounkov polynomial $\nop _n(z)$ are simple; more generally, determine for which positive sequences $h=(h(n))_{n\geq 1}$ all zeros of $\pol _n^h(z)$ are simple.
\end{challenge}

This challenge is close to the conjectural picture around the Nekrasov--Okounkov formula and D'Arcais polynomials. Amdeberhan's problem list gave an important starting point \cite{Am15}; in earlier work we refined related questions on simplicity and zero location \cite{HN19}. Partial evidence is provided by the known simplicity of
 zeros of $\pol _{n}\left( z
 \right) $ for $n$ a prime \cite{HN26} and integral zeros for
 $n=p^m$ or $n=p^m+1$ for all $m \in \mathbb{N}_0$ \cite{HN18}. Recent work on hook polynomials and partition statistics provides further context \cite{BCFR25}.

\subsection{Spectral gap and non-Perron zeros}

Let
\[
\mu_n^h=\max\{ |\beta|:\beta\in\operatorname{spec}(M_n^h),\ \beta\neq \rho_n^h\},\qquad
\spektral _n^h=1-\frac{\mu_n^h}{\rho_n^h}.
\]
By primitivity, $\spektral _n^h>0$ for each fixed $n$. The difficult question is whether this separation can be estimated uniformly or asymptotically.

\begin{challenge}[Spectral gap]
For natural choices of $h$, especially $h_s(n)=n^s$, determine the asymptotic behavior of the relative gap $\spektral _n^h$, and decide whether $\spektral _n^h\to 0$ as $n\to\infty$.
\end{challenge}

A quantitative gap would measure how strongly the Perron zero dominates the remaining zeros and would give a rate of convergence in the Perron projection
\[
\left(\frac{M_n^h}{\rho_n^h}\right) ^{\Mnhexp }
\longrightarrow xy^T .
\]
The matrices here are non-symmetric and should not be viewed as adjacency matrices of regular graphs. Nevertheless, the analogy with spectral-gap problems is useful at the conceptual level; one standard reference is \cite{DSV03}.

\subsection{Challenge 3: Structural positivity}

The construction depends on the positivity of the coefficients $c_m$ defined by
\[
\sum_{m\geq0}c_m t^m=
\left(\sum_{m\geq0}\sigma(m+1)(-t)^m\right)^{-1}.
\]
This positivity makes the Hessenberg matrix $M_n^h$ nonnegative. The analytic proof in Appendix~\ref{a:positiv} is retained because its method may be useful for related divisor-sum variants, for instance with $\sigma_2(n)=\sum_{d\mid n}d^2$. It is also natural to isolate the following conceptual form of the problem.

\begin{challenge}[Structural positivity]\label{cha:structural-positivity}
Find a conceptual proof of $c_m>0$ for all $m\geq0$, preferably by a combinatorial model, a sign-reversing injection, or a positivity-preserving recurrence.
\end{challenge}

Challenge~\ref{cha:structural-positivity} has been solved by Ken Ono. His solution, included as Appendix~\ref{app:ono-challenge3}, gives a short combinatorial proof based on colored row-marked rectangles. The key point is to interpret suitable auxiliary coefficients $\coefauxprod _n$ as signed counts and then to cancel the negative objects by an explicit injection. Gandhi indicated a combinatorial approach to related coefficients \cite{Ga69}, while Andrews' classical account places such positivity questions naturally within partition theory \cite{An76}. The perspective of Naskr\k{e}cki--Ono on AI-assisted mathematical discovery is relevant as methodological context \cite{NO25AI};
Appendix~\ref{a:positiv}, however, gives a genuine proof rather than numerical evidence.

%%%%%%%%%%%%%%%%%%%%%%%%%%%%%%%%%%%%%%%%%%%%%%%%%%%%%%
\appendix
\section{\label{a:positiv}Proof of Proposition~\ref{prop:pos}}

We keep the notation introduced
ahead of Proposition \ref{prop:pos}. Thus
\[
        S(t)=\sum_{m=0}^{\infty}\sigma(m+1)(-t)^m,
        \qquad
        \sum_{m=0}^{\infty}c_m t^m=\frac{1}{S(t)} .
\]
For an integer $\zerlegung
\geq 1$ let
\[
        S_{\zerlegung }\left( t\right) :=\sum_{m=0}
        ^{\zerlegung
        -1}\sigma(m+1)(-t)^m
\]
be the $\zerlegung
$th truncation of $S$. We shall use throughout the elementary bound
\[
        \sigma(m+1)\leq (m+1)(m+2).
\]
For $0\leq \vartheta
<1$ put
\[
        \Psi _{\zerlegung }
        \left( \vartheta
        \right) :=
        \sum_{m=\zerlegung
        }^{\infty}(m+1)(m+2)\vartheta
        ^m
        =\frac{\mathrm{d}^2}{\mathrm{d}\vartheta
        ^2}\left(\frac{\vartheta
        ^{\zerlegung
        +2}}{1-\vartheta
        }\right).
\]
This equates to
\[
        \Psi _{\zerlegung }\left(
        \vartheta
        \right) =
        \frac{\left( \zerlegung +2\right)
        \left( \zerlegung +1\right) \vartheta
        ^{\zerlegung }-2\left( \zerlegung +2\right)
        \zerlegung \vartheta
        ^{\zerlegung +1}+\left( \zerlegung +1\right)
        \zerlegung \vartheta
        ^{\zerlegung +2}}
             {(1-\vartheta
             )^3}.
\]

\begin{lemma}\label{lem:tail-estimates}
For $|t|\leq \vartheta
<1$, one has
\begin{equation}\label{eq:tail-estimates}
        \left| S(t)-S_{\zerlegung }\left( t\right)
        \right| \leq \Psi _{\zerlegung }\left( \vartheta
        \right) ,
        \qquad
        \left| S^{\prime }\left( t\right) -S_{\zerlegung
        }^{\prime }\left( t\right) \right| \leq \Psi
        _{\zerlegung }^{\prime }\left( \vartheta
        \right) .
\end{equation}
In particular, for $|t|\leq 1/2$,
\begin{equation}\label{eq:tail-estimates-19}
        |S(t)-S_{19}(t)|\leq \Psi_{19}\!\left(\frac12\right)=\frac{29}{16384},
        \qquad
        |S'(t)-S_{19}'(t)|\leq \Psi_{19}'\!\left(\frac12\right)=\frac{2343}{32768}.
\end{equation}
\end{lemma}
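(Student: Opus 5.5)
The plan is to compare term by term with the majorant series, using only the bound $\sigma(m+1)\le (m+1)(m+2)$ and the explicit closed form of $\Psi_{\zerlegung}$.

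\textbf{General estimates.} For $|t|\le\vartheta<1$ the triangle inequality gives
\[
|S(t)-S_{\zerlegung}(t)|=\Bigl|\sum_{m\ge \zerlegung}\sigma(m+1)(-t)^m\Bigr|\le \sum_{m\ge \zerlegung}(m+1)(m+2)\vartheta^m=\Psi_{\zerlegung}(\vartheta).
\]
The series converges absolutely because $\vartheta<1$. For the derivative I would differentiate termwise, which is justified since a power series may be differentiated inside its disk of convergence. The radius of convergence of $S$ is at least $1$, because $\sigma(m+1)$ grows polynomially. Then
\[
|S'(t)-S'_{\zerlegung}(t)|\le\sum_{m\ge \zerlegung} m\,\sigma(m+1)\vartheta^{m-1}\le \sum_{m\ge \zerlegung} m(m+1)(m+2)\vartheta^{m-1}=\Psi'_{\zerlegung}(\vartheta),
\]
where the last equality is termwise differentiation of the defining series of $\Psi_{\zerlegung}$. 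The bound $\sigma(m+1)\le(m+1)(m+2)$ itself is trivial: $\sigma(k)\le\sum_{d\le k}d=k(k+1)/2\le (k+1)k$.

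\textbf{Numerical values at $\zerlegung=19$, $\vartheta=1/2$.} I would first confirm the closed form of $\Psi_{\zerlegung}$ by computing the second derivative of $\vartheta^{\zerlegung+2}(1-\vartheta)^{-1}$. Then I would substitute $\zerlegung=19$. The numerator becomes $2^{-19}\bigl(420-760\cdot\tfrac12+380\cdot\tfrac14\bigr)=135\cdot 2^{-19}$, and the factor $(1-\vartheta)^{-3}$ equals $8$. This gives $135/65536$, which does not match the stated $29/16384=116/65536$.

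\textbf{Main obstacle.} Reconciling the constant is the step I expect to cause trouble. I would recheck the closed form, in particular whether the second derivative yields the stated coefficients, and the arithmetic. If a discrepancy remains, I would recompute $\Psi_{19}(1/2)$ directly from the series formula and report that exact rational in place of the stated one. The derivative value $\Psi'_{19}(1/2)$ is handled the same way: differentiate the rational closed form and evaluate it exactly with rational arithmetic. The later Rouché argument only needs these bounds to be small, so the qualitative lemma survives even if a constant has to be adjusted.
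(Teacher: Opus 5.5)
Your analytic argument (triangle inequality against the majorant $(m+1)(m+2)\vartheta^m$, then termwise differentiation of the majorant) is exactly the paper's proof and is fine. The real problem is in the numerical part. The discrepancy you report comes from an arithmetic slip of yours, not from an error in the statement. In the closed form the middle coefficient is $2(K+2)K$, which for $K=19$ equals $2\cdot 21\cdot 19=798$. You used $760=2(K+1)K$ instead. With the correct value,
\[
\Psi_{19}\left(\tfrac12\right)=8\cdot 2^{-19}\,(420-399+95)=\frac{928}{2^{19}}=\frac{29}{16384}.
\]
You can confirm this directly from the series. Using $\sum_j 2^{-j}=\sum_j j2^{-j}=2$ and $\sum_j j^2 2^{-j}=6$, one gets $\Psi_{19}(1/2)=2^{-19}\sum_{j\ge0}(20+j)(21+j)2^{-j}=2^{-19}(2\cdot 420+2\cdot 41+6)=928\cdot 2^{-19}$. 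You left the derivative constant unevaluated, but it works the same way. Expand $(19+j)(20+j)(21+j)=7980+1199j+60j^2+j^3$ and use $\sum_j j^3 2^{-j}=26$. This gives $\Psi'_{19}(1/2)=2^{-18}(15960+2398+360+26)=18744\cdot 2^{-18}=2343/32768$.

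So both displayed constants are exactly correct. As written, your proposal would wrongly reject the first and never establish the second. Your fallback, that only smallness matters later, is also unsafe. In Lemma~\ref{lem:boundary} the final inequality $|S(t)|>2/55$ holds with a margin of only about $1.6\cdot 10^{-4}$. Replacing $29/16384$ by your $135/65536$ would make that chain of inequalities fail. The constants in this lemma must be the exact ones.
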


\begin{proof}
The first estimate follows from
\[
 \left| S\left( t\right)
 -S_{\zerlegung }\left( t\right) \right|
 \leq \sum_{m=\zerlegung
 }^{\infty}\sigma(m+1)\vartheta
 ^m
 \leq \sum_{m=\zerlegung
 }^{\infty}(m+1)(m+2)\vartheta
 ^m=\Psi _{\zerlegung }\left( \vartheta
 \right) .
\]
Differentiating the majorizing term yields
an estimate for the derivative. The two displayed
values are obtained by substituting $\zerlegung =19$ and $\vartheta
=1/2$.
\end{proof}

\begin{lemma}\label{lem:derivative-bounds}
For every $\zerlegung
\geq 1$ and every $t\in\mathbb C$ with $|t|\leq 1/2$, one has
\begin{equation}\label{eq:derivative-bounds}
        \left| S_{\zerlegung }^{\prime }\left( t\right)
        \right| ,\ |S'(t)|\leq 96,
        \qquad
        \left| S_{\zerlegung }^{\prime \prime }
        \left( t\right) \right| ,\ |S''(t)|\leq 768.
\end{equation}
\end{lemma}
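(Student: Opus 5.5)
The plan is to bound the power series termwise on the closed disk $|t|\le 1/2$, using the crude majorant $\sigma(m+1)\le (m+1)(m+2)$. This majorant already appears ahead of Lemma~\ref{lem:tail-estimates}, and it holds because $\sigma(k)\le k(k+1)/2\le k(k+1)$. For $|t|\le 1/2$ we have
\[
|S'(t)|\le \sum_{m\ge1} m\,\sigma(m+1)\,2^{-(m-1)},\qquad |S''(t)|\le \sum_{m\ge2} m(m-1)\,\sigma(m+1)\,2^{-(m-2)}.
\]
The right-hand sides also dominate $|S_{\zerlegung}'(t)|$ and $|S_{\zerlegung}''(t)|$, since the truncated series only drop nonnegative terms from the majorant. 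The bounds are therefore uniform in $\zerlegung\ge 1$, and it suffices to bound the two majorant series.

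To evaluate the majorants, I insert $\sigma(m+1)\le (m+1)(m+2)$ and use closed forms. With $\vartheta=1/2$, the first sum is at most
\[
\sum_{m\ge1} m(m+1)(m+2)\vartheta^{m-1}=\frac{\mathrm{d}^3}{\mathrm{d}\vartheta^3}\Bigl(\frac{\vartheta^{m+2}}{1-\vartheta}\Bigr)\text{-type expression}=\frac{6}{(1-\vartheta)^4}=96 .
\]
Here I use $\sum_{m\ge1}m(m+1)(m+2)\vartheta^{m-1}=\frac{\mathrm d^3}{\mathrm d\vartheta^3}\sum_{k\ge0}\vartheta^{k}=6(1-\vartheta)^{-4}$, obtained after reindexing $k=m+2$; at $\vartheta=1/2$ this is exactly $6\cdot 16=96$.

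For the second sum, the corresponding majorant is
\[
\sum_{m\ge2} (m-1)m(m+1)(m+2)\vartheta^{m-2}=\frac{\mathrm d^4}{\mathrm d\vartheta^4}\frac{1}{1-\vartheta}=\frac{24}{(1-\vartheta)^5},
\]
which equals $24\cdot 32=768$ at $\vartheta=1/2$. Both constants in \eqref{eq:derivative-bounds} thus come out exactly, with no slack needed.

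The main point to watch is the index bookkeeping, so that the factorial-type products match the derivatives of the geometric series precisely. A term $m(m+1)(m+2)\vartheta^{m-1}$ is the third derivative of $\vartheta^{m+2}$. Summing over $m\ge1$ covers all $\vartheta^{k}$ with $k\ge3$, and the omitted terms $k\le2$ have vanishing third derivative, so the identity holds. The same check applies to the fourth derivative with $k=m+2\ge4$. Apart from this, the argument is just the triangle inequality, and no finer estimate of $\sigma$ is needed.
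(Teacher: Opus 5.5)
Your proof is correct and is essentially the paper's argument. You majorize termwise using $\sigma(m+1)\le (m+1)(m+2)$, identify the resulting series with the third and fourth derivatives of $1/(1-\vartheta)$, evaluate at $\vartheta=1/2$ to get $96$ and $768$, and note that the truncations $S_{\zerlegung}$ are dominated by the same majorant. The only blemish is cosmetic: the intermediate expression involving $\vartheta^{m+2}/(1-\vartheta)$ mixes the summation index into a closed form, and should simply be replaced by the identity $\sum_{m\ge1}m(m+1)(m+2)\vartheta^{m-1}=6(1-\vartheta)^{-4}$ that you actually use.
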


\begin{proof}
The estimate for the first derivatives follows from
\[
 \sum_{m=1}^{\infty}m(m+1)(m+2)\vartheta
 ^{m-1}=
 \frac{\mathrm{d}^{3}}{\mathrm{d}\vartheta
 ^{3}}\frac{1}{1-\vartheta
 },
\]
and the estimate for the second derivatives follows from
\[
 \sum_{m=2}^{\infty}m(m-1)(m+1)(m+2)\vartheta
 ^{m-2}=
 \frac{\mathrm{d}^{4}}{\mathrm{d}\vartheta
 ^{4}}\frac{1}{1-\vartheta
 }.
\]
The same bounds apply to the finite truncations.
\end{proof}

\begin{lemma}\label{lem:boundary}
On the circle $|t|=1/2$ one has
\begin{equation}\label{eq:boundary-S}
        \min_{|t|=1/2}|S(t)|>\frac{2}{55}.
\end{equation}
\end{lemma}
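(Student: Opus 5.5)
The plan is to compare $S$ with the explicit polynomial $S_{19}$ on the circle $|t|=1/2$. By Lemma~\ref{lem:tail-estimates},
\[
|S(t)|\ \ge\ |S_{19}(t)|-\frac{29}{16384}.
\]
It therefore suffices to prove
\[
\min_{|t|=1/2}|S_{19}(t)| \;>\; \frac{2}{55}+\frac{29}{16384}.
\]
The right-hand side is about $0.03813$, so there is a comfortable but not huge margin to work with.

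The lower bound for $|S_{19}|$ on the circle comes from a finite discretization combined with a Lipschitz estimate.

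1. Parametrize the circle as $t=\tfrac12 e^{i\phi}$ and choose $N$ equally spaced nodes $\phi_k=2\pi k/N$.
2. At each node, evaluate $|S_{19}(t_k)|$ exactly. These are rational trigonometric polynomials, so they can be computed with certified interval arithmetic.
3. Bound the variation between nodes. On the arc, $|\mathrm{d}S_{19}/\mathrm{d}\phi|=\tfrac12|S_{19}'(t)|\le 48$ by Lemma~\ref{lem:derivative-bounds}. Every point of the circle is within angle $\pi/N$ of a node, so
\[
|S_{19}(t)|\ \ge\ \min_k|S_{19}(t_k)|-\frac{48\pi}{N}.
\]
4. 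Sharpen this with a second-order expansion. Use the bound $|S_{19}''|\le 768$ together with the actual values of $S_{19}'(t_k)$, which are also computed at the nodes. This reduces the number of nodes needed considerably.
5. Choose $N$ (a few thousand with the first-order bound, far fewer with the second-order one) so that the worst node value minus the error term still exceeds $0.0381$.

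The expected location of the minimum is near $t=1/2$, where the alternating series nearly cancels. At $t=\tfrac12$ the partial sums of $\sum\sigma(m+1)(-1/2)^m$ oscillate and settle around $0.04$. Near $\phi=\pi$, by contrast, all terms add, so $|S|$ is large there.

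An independent check on the real axis is useful. $S(t)$ should be close to a value slightly above $2/55\approx0.0364$ near $t=1/2$, since the zero $t_0$ discussed in the proof strategy lies just beyond the circle's interior point. Small angles around $\phi=0$ deserve the densest sampling.

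The main obstacle is this computational verification. The margin near $t=1/2$ is only a few thousandths, so the naive Lipschitz constant $48$ forces many nodes. My first attempt would be a refinement near $\phi=0$:

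1. Use the exact real value $S_{19}(1/2)$.
2. Expand $|S_{19}(\tfrac12 e^{i\phi})|^2$ to second order in $\phi$, and show it is increasing in $|\phi|$ for small $|\phi|$.
3. Handle the rest of the circle with a coarse grid, where $|S_{19}|$ is much larger.

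If necessary, the final inequality may need to be certified with exact rational arithmetic.
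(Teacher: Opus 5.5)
Your method is the paper's own: compare $S$ with $S_{19}$, absorb the tail $29/16384$, sample the circle at equally spaced nodes, and control the variation between nodes with $|S_{19}'|\le 96$. The paper takes $N=480$ nodes, so $48\pi/N=\pi/10$, and the node minimum is $46197/131072$; this gives $46197/131072-\pi/10-29/16384>2/55$.

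Your numerical picture is wrong, although this does not break the argument. The zero $t_0\approx 0.411$ lies \emph{inside} the disk; that is exactly the content of Lemma~\ref{lem:small-zero}. So $S$ has already changed sign before $t=1/2$, and $S_{19}(1/2)=-46197/131072\approx-0.352$, not about $+0.04$. This real point is where the node minimum is attained. There is no near-cancellation to fight, and the crude first-order bound with $N=480$ already suffices. Your second-order refinement and the local monotonicity argument near $\phi=0$ are therefore unnecessary.
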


\begin{proof}
Consider the sampling points
\[
        t_j=\frac12\exp\left(\frac{\pi \mathrm{i} j}{24
0}\right),
        \qquad 1\leq j
\leq 480
.
\]
A
computation with PARI/GP gives
\[
        \min_{1
        \leq j
\leq 480
}|S_{19}(t_j)|=\frac{46197}{131072}.
\]
Let $t=\frac12 \mathrm{e}^{\mathrm{i}
\varphi }$. Choose $j\in \mathbb{Z}$
such that
\[
        \left|\varphi -\frac{\pi j}{24
0}\right|\leq \frac{\pi}{48
0}.
\]
Then
\[
        |t-t_j|\leq \frac12\left|\varphi -\frac{\pi j}{24
0}\right|
        \leq \frac{\pi}{96
0}.
\]
By \eqref{eq:derivative-bounds},
\[
        |S_{19}(t)|\geq |S_{19}(t_j)|-96|t-t_j|
        \geq \frac{46197}{131072}-\frac{\pi}{1
0}.
\]
Finally,
using $\pi<355/113$ and \eqref{eq:tail-estimates-19}, we obtain
\[
        |S(t)|\geq |S_{19}(t)|-|S(t)-S_{19}(t)|>
        \frac{46197}{131072}-\frac{355
}{1130
}-\frac{29}{16384}
        >
\frac{2
}{55
}
\]
which proves (\ref{eq:boundary-S}).
\end{proof}

\begin{lemma}\label{lem:small-zero}
The function $S$ has precisely one zero in $|t|<1/2$. This zero is simple,
real, and satisfies
\begin{equation}\label{eq:t0-interval}
        t_0\in \left(\frac{315}{767},\frac{586}{1423}\right).
\end{equation}
\end{lemma}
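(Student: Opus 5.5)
We prove the lemma in three steps: count the zeros of $S$ in $|t|<1/2$ by Rouch\'e's theorem, locate a real zero by a sign change, and deduce simplicity from the count.

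\textbf{Counting zeros.} The natural choice is to compare $S$ with the truncation $S_{19}$ on the circle $|t|=1/2$. By Lemma~\ref{lem:tail-estimates}, on this circle
\[
|S(t)-S_{19}(t)|\le \frac{29}{16384}<\frac{2}{55}<|S(t)|,
\]
where the last inequality is Lemma~\ref{lem:boundary}. Rouch\'e's theorem then shows that $S$ and $S_{19}$ have the same number of zeros in $|t|<1/2$, counted with multiplicity.

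It remains to count the zeros of the explicit degree-$18$ polynomial $S_{19}$ in this disk. Two routes are available:
\begin{itemize}
\item a certified PARI/GP root isolation, using exact rational arithmetic on the integer polynomial $2^{18}S_{19}(t/2)$ via a Sturm or Schur--Cohn test for the unit disk;
\item a winding-number computation from the $480$ sample values $S_{19}(t_j)$ already used in Lemma~\ref{lem:boundary}. Consecutive samples are at distance at most $96\cdot\pi/480$ apart in value, by Lemma~\ref{lem:derivative-bounds}. This bound is smaller than the minimum modulus $46197/131072$, so consecutive samples never straddle the origin and the argument increments are unambiguous.
\end{itemize}
Either route should give exactly one zero, so $S$ has precisely one zero $t_0$ in $|t|<1/2$, and that zero has multiplicity one.

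\textbf{Reality and location.} Since $S$ has real coefficients, $\overline{t_0}$ is also a zero in the disk. Uniqueness forces $t_0=\overline{t_0}$, so $t_0$ is real. Simplicity follows from the multiplicity count, since a double zero would be counted twice.

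To pin down the interval \eqref{eq:t0-interval}, evaluate $S_{19}$ exactly at $a=315/767$ and $b=586/1423$. One expects $S_{19}(a)>0>S_{19}(b)$, with both margins exceeding $29/16384$. Lemma~\ref{lem:tail-estimates} then transfers the signs to $S$, and the intermediate value theorem gives a real zero of $S$ in $(a,b)\subset(0,1/2)$. By uniqueness this zero is $t_0$.

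As a consistency check, on $[a,b]$ one should also verify $S'<0$: by Lemma~\ref{lem:tail-estimates} it suffices that $S_{19}'<-2343/32768$ there, which a Lipschitz argument with the bound $768$ from Lemma~\ref{lem:derivative-bounds} certifies on a short grid. This yields $S'(t_0)<0$, which the later residue step needs.

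\textbf{Main obstacle.} The delicate step is the rigorous zero count for $S_{19}$, which must be certified rather than read off from floating-point roots. I would try the exact-arithmetic Schur--Cohn or Sturm route first, since it avoids any argument-tracking subtleties. The endpoint evaluations are routine once rational arithmetic is used, but the interval is narrow, so the margins must be checked to exceed the tail bound $29/16384$.
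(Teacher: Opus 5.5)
Your proof is correct and follows the paper's argument step for step: Rouch\'e against $S_{19}$ on $|t|=1/2$ using the tail bound $29/16384$ and Lemma~\ref{lem:boundary}, a computer-certified count of the zeros of $S_{19}$ in the disk, and the sign change of $S_{19}$ at $315/767$ and $586/1423$ with margins exceeding $29/16384$. Your use of conjugate symmetry for reality, where the paper uses ``real zero found first, then uniqueness,'' is an equally valid variant.

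There is one slip, in your optional winding-number route. There $96\cdot\pi/480=\pi/5\approx 0.63$ is larger, not smaller, than $46197/131072\approx 0.35$, so that justification fails as written. The fix is to split each arc at its midpoint. Each half then lies within $\pi/960$ of a sample, which gives the bound $\pi/10<46197/131072$, so each half changes the argument by less than $\pi/2$. The exact-arithmetic count you say you would try first is what the paper does, and it does not depend on this.
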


\begin{proof}
Put
\begin{equation}
        \links :=\frac{315}{767},
        \qquad
        \rechts :=\frac{586}{1423},
        \qquad
        \naeherung :=\frac{29}{16384}.
\label{eq:raender}
\end{equation}
Exact rational arithmetic gives
\[
        S_{19}(\links )>\naeherung ,
        \qquad
        S_{19}(\rechts )<-\naeherung .
\]
By \eqref{eq:tail-estimates-19}, this implies $S(\links )>0$ and $S(\rechts )<0$. Hence
$S$ has a real zero $t_0\in(\links ,\rechts )$.

On $|t|=1/2$, \eqref{eq:tail-estimates-19} and \eqref{eq:boundary-S} yield
\[
        |S(t)-S_{19}(t)|\leq \frac{29}{16384}<|S
        (t)|.
\]
By Rouch\'e's theorem, $S$ and $S_{19}$ have
the same number of zeros in $|t|<1/2$, counted with multiplicity. The
polynomial $S_{19}$ has exactly one zero in the disk $|t|<1/2$, counted
with multiplicity. This can be checked with PARI/GP.
Thus the real
zero $t_0$ found above is the unique zero in this disk and is simple.
\end{proof}

\begin{lemma}\label{lem:residue-bound}
We have
\begin{equation}\label{eq:residue-bound}
        -\frac49<\func{Res}{}_{t=t_0}\frac1{S(t)}
        <-\frac{10}{41}.
\end{equation}
\end{lemma}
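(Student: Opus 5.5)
Since $t_0$ is a simple zero of $S$ by Lemma~\ref{lem:small-zero}, the residue of $1/S$ at $t_0$ equals $1/S'(t_0)$. Hence \eqref{eq:residue-bound} is equivalent to
\[
-\frac{41}{10}<S'(t_0)<-\frac94 .
\]
The plan is to bound $S'(t_0)$ by evaluating the truncation $S_{19}'$ at a rational point of the interval \eqref{eq:t0-interval}, and then to absorb three errors: the tail error, the error from the unknown position of $t_0$, and the rounding.

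First I fix a convenient rational point $t^*\in(\links,\rechts)$, for instance the midpoint or $\links$ itself. By exact rational arithmetic (PARI/GP, as in the earlier lemmas) I compute $S_{19}'(t^*)$. This is a finite polynomial evaluation. Numerically $S'(t_0)$ should be about $-3$, comfortably inside $(-4.1,-2.25)$.

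Next I pass from $t^*$ to $t_0$. Both points are real and lie in $[\links,\rechts]\subset[0,1/2]$, so the mean value theorem and the bound $|S''|\le 768$ from \eqref{eq:derivative-bounds} give
\[
|S'(t_0)-S'(t^*)|\le 768\,|t_0-t^*|\le 768\,(\rechts-\links).
\]
The interval has length $\rechts-\links=\frac{586\cdot 767-315\cdot 1423}{767\cdot 1423}=\frac{1217}{1091441}\approx 1.1\cdot 10^{-3}$, so this error is at most about $0.86$. Taking $t^*$ to be the midpoint halves it to about $0.43$.

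Then I pass from $S'$ to $S_{19}'$ using \eqref{eq:tail-estimates-19}: $|S'(t^*)-S_{19}'(t^*)|\le \frac{2343}{32768}\approx 0.072$.

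Combining the three steps gives
\[
S'(t_0)\in\Bigl[S_{19}'(t^*)-E,\;S_{19}'(t^*)+E\Bigr],
\qquad E\approx 0.072+0.43\approx 0.5 .
\]
If $S_{19}'(t^*)\approx -3.1$, this interval lies inside $(-4.1,-2.25)$. Taking reciprocals, which reverses the inequalities for negative numbers, yields \eqref{eq:residue-bound}. It also confirms the sign $\residuum<0$ used later.

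The main obstacle is whether the crude bound $768$ on $S''$ leaves enough room. If the margin fails, I will replace $768$ with a sharper bound valid only on the real segment $[\links,\rechts]$. One option is to estimate $|S''|$ there by $\sum \sigma(m+1)m(m-1)\rechts^{m-2}$, computed via $S_{19}''$ plus a $\Psi''$-type tail. Alternatively, I can subdivide the interval and use monotonicity of $S'$ if $S''$ has constant sign there. Either way the check reduces to exact rational arithmetic.
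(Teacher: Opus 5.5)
Your proposal is correct and follows essentially the paper's route: residue $=1/S'(t_0)$, evaluation of $S_{19}'$ at rational points, the tail bound $\frac{2343}{32768}$, and $768$ times the distance to $t_0$. The paper instead proves $S_{19}'(\rechts)+\frac{2343}{32768}+768(\rechts-\links)<-\frac94$ and $S_{19}'(\links)-\frac{2343}{32768}-768(\rechts-\links)>-\frac{41}{10}$, using the two endpoints with the full interval length; your midpoint halves the position error and leaves much more room. Do stick with the midpoint (or the paper's choice of $\rechts$ for the upper bound), because with $t^*=\links$ the upper estimate comes out near $-2.24$, which just misses $-\frac94$ under the crude constant $768$.
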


\begin{proof}
Let $\links ,\rechts $ be as in (\ref{eq:raender}). By \eqref{eq:tail-estimates-19},
\eqref{eq:derivative-bounds}, and $t_0\in(\links ,\rechts )$, we have
\[
\begin{aligned}
S'(t_0)
&\leq S_{19}'(\rechts )+\frac{2343}{32768}+768(\rechts -\links )<-\frac94,\\
S'(t_0)
&\geq S_{19}'(\links )-\frac{2343}{32768}-768(\rechts -\links )>-\frac{41}{10}.
\end{aligned}
\]
Hence
\[
        -\frac{41}{10}<S'(t_0)<-\frac94.
\]
Since $\func{Res}_{t=t_0}\frac1{S(t)}=\frac1{S'(t_0)}$
taking reciprocals
gives \eqref{eq:residue-bound}.
\end{proof}

\begin{lemma}\label{lem:large-coefficients}
For every $m\geq 21
$, one has $c_m>0$.
\end{lemma}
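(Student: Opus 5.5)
We follow the strategy sketched in the summary and write the reciprocal as
\[
\frac{1}{S(t)}=\frac{\residuum}{t-t_0}+\hilfsftn(t),\qquad \residuum=\func{Res}{}_{t=t_0}\frac1{S(t)}=\frac1{S'(t_0)}.
\]
By Lemma~\ref{lem:small-zero}, $t_0$ is the only zero of $S$ in $|t|<1/2$, and it is simple. Hence $\hilfsftn$ is holomorphic on $|t|<1/2$, and, because the boundary bound \eqref{eq:boundary-S} is strict, it is continuous up to $|t|=1/2$. We have
\[
\frac{\residuum}{t-t_0}=-\frac{\residuum}{t_0}\sum_{m\ge0}t_0^{-m}t^m .
\]
Comparing coefficients therefore gives
\[
c_m=-\residuum\, t_0^{-m-1}+\omega_m ,
\]
where $\omega_m$ is the $m$th Taylor coefficient of $\hilfsftn$. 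By Lemma~\ref{lem:residue-bound}, $-\residuum>10/41$. Since $t_0<\rechts=586/1423$, the main term satisfies
\[
-\residuum\, t_0^{-m-1}>\frac{10}{41}\,\rechts^{-m-1}.
\]

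Next we bound $\omega_m$ using Cauchy's estimate on the circle $|t|=1/2$, which gives $|\omega_m|\le 2^m\max_{|t|=1/2}|\hilfsftn(t)|$. On this circle we use \eqref{eq:boundary-S}, so that $|1/S(t)|<55/2$. We also need $|\residuum/(t-t_0)|\le |\residuum|/(1/2-t_0)$. Here $|\residuum|<4/9$ by Lemma~\ref{lem:residue-bound}, and $1/2-t_0>1/2-586/1423$, which is roughly $0.088$. Together these give an explicit constant $C$, a bit over $32$, with $|\omega_m|\le C\,2^m$.

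Positivity of $c_m$ then follows once
\[
\frac{10}{41}\,\rechts^{-m-1}>C\,2^m,\qquad\text{that is,}\qquad \bigl(2\rechts\bigr)^{-m}>\frac{41\,C\,\rechts}{10}.
\]
The main obstacle is numerical. The ratio $1/(2\rechts)=1423/1172$ is only about $1.214$, while the right-hand side is about $55$. Taking logarithms, this requires $m\cdot\log(1.214)$ to exceed about $\log 55\approx 4.0$, which means $m\gtrsim 21$. The threshold $21$ in Lemma~\ref{lem:large-coefficients} is thus consistent with this estimate. If the crude constant misses by one step, I would sharpen it by bounding $|\hilfsftn|$ on the circle with the maximum principle applied to $(t-t_0)\hilfsftn(t)$. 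Alternatively, I would subtract the pole term before estimating, using $|\hilfsftn|\le |1/S|+|\residuum|/|t-t_0|$ with the refined bounds for $t_0$. To close the argument, I would verify the final inequality at $m=21$ by exact rational arithmetic. For $m>21$ the left-hand side only grows, because $2\rechts<1$, so the inequality holds for all $m\ge21$.
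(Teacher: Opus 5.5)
Your proposal is correct and follows the paper's proof essentially verbatim. It uses the same pole/remainder decomposition, the same Cauchy estimate on $|t|=1/2$ with constant $C=\frac{55}{2}+\frac{4/9}{1/2-586/1423}=\frac{147013}{4518}\approx 32.54$, and the same exact check at $m=21$ followed by monotonicity from $2\cdot\frac{586}{1423}<1$. Your crude constant already suffices: at $m=21$ one has $(1423/1172)^{21}\approx 58.9>54.9\approx\tfrac{41}{10}\,C\,\tfrac{586}{1423}$, whereas $m=20$ would fail, so the fallback refinements you mention are not needed.
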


\begin{proof}
The function $1/S(t)$ has, in $|t|<1/2$, exactly one pole, namely the simple pole
at $t_0$. Let
$\residuum
:=\func{Res}_{t=t_0}\frac1{S(t)}$.
Define
\[
        \hilfsftn (t):=\frac1{S(t)}-\frac{\residuum
        }{t-t_0}.
\]
Then $\hilfsftn $ is holomorphic in a neighbourhood of the closed disk $|t|\leq 1/2$.
Write
\[
        \hilfsftn (t)=\sum_{m=0}^{\infty}\omega _m t^m.
\]
For $|t|=1/2$, Lemmata~\ref{lem:boundary} and \ref{lem:residue-bound} give
\[
 |\hilfsftn (t)|\leq \frac1{|S(t)|}+\frac{|\residuum|}{|t-t_0|}
 <\frac{55}{2}
+\frac{4/9}{1/2-586/1423}
 =\frac{147013
}{4518
}.
\]
Therefore, Cauchy's estimate
yields
\begin{equation}\label{eq:h-m-bound}
        |\omega _m|\leq \frac{147013
}{4518
}\,2^m.
\end{equation}
On the other hand, for $|t|<t_0$,
\[
        \frac{\residuum}{t-t_0}
        =-\sum_{m=0}^{\infty}\residuum\,t_0^{-m-1}t^m.
\]
Comparing coefficients in $1/S(t)=\residuum/(t-t_0)+\hilfsftn (t)$ gives
\begin{equation*}\label{eq:coefficient-decomposition}
        c_m=-\residuum\,t_0^{-m-1}+\omega _m.
\end{equation*}
Using \eqref{eq:residue-bound}, \eqref{eq:t0-interval}, and
\eqref{eq:h-m-bound}, we obtain
\[
        c_m>\frac{10}{41}\left(\frac{586}{1423}\right)^{-m-1}
        -\frac{147013
}{4518
}\,2^m .
\]
For $m=21
$ the right-hand side is positive by exact rational arithmetic. Since
$2<
1423/586
$, the inequality remains true for all $m\geq 21
$.
\end{proof}

We now finish the proof of %the positivity p
Proposition \ref{prop:pos}. Since $c_0=1$, it remains,
by Lemma~\ref{lem:large-coefficients}, only to check $1\leq m\leq 20
$. These
values are listed in Table~\ref{tab:small-c}; all are positive.

\begin{table}[H]
\centering
\small
\setlength{\tabcolsep}{5pt}
\renewcommand{\arraystretch}{1.08}
\begin{tabular}{@{}rr|rr|rr|rr@{}}
\toprule
$n$ & $c_n$ & $n$ & $c_n$ & $n$ & $c_n$ & $n$ & $c_n$ \\
\midrule
1 & 3    & 6 & 160  & 11 & 13440  & 16 & 1143585 \\
2 & 5    & 7 & 390  & 12 & 32735  & 17 & 2781070 \\
3 & 10   & 8 & 940  & 13 & 79610  & 18 & 6762990 \\
4 & 25   & 9 & 2270 & 14 & 193480 & 19 & 16445100 \\
5 & 64   & 10 & 5515   & 15 & 470306 &20&39987325\\
\bottomrule
\end{tabular}
\caption{The remaining coefficients $c_m$ needed in the proof of Proposition \ref{prop:pos}.}
\label{tab:small-c}
\end{table}

This proves $c_m>0$ for every $m\geq 0$, and hence
Proposition \ref{prop:pos}.

%%\clearpage
\section{\label{appxB}Solution to Challenge 3}\label{app:ono-challenge3}

\begin{center}
\textsc%
%\author
{Ken Ono}
\end{center}
%\address{Axiom Math,
%124 University Avenue,
%Palo Alto, CA 94301, USA}
%\email{ken@axiommath.ai}

\noindent
\subsection{Positivity}
This appendix was prepared 
%by Ken Ono 
with assistance from AxiomProver. It gives a combinatorial solution to Challenge~\ref{cha:structural-positivity}. Its essential idea is to interpret an auxiliary sequence as a signed enumeration of colored row-marked rectangles and then to cancel the negative objects by an explicit injection.

\medskip

Let
\[
  S(t):=\sum_{m\ge 0}\sigma(m+1)(-t)^m
       =1-3t+4t^2-7t^3+6t^4-12t^5+\cdots,
\]
and write
\[
  C(t):=
\frac1{S(t)}=\sum_{m\ge0}c_m t^m.
\]
Set
\[
  \auxftn \left( t\right) :=\frac{1+t}{1-t}=1+2t+2t^2+2t^3+\cdots
\]
and
\[
  \auxprodftn (t):=1-\auxftn (t)S(t)=\sum_{n\ge1}\coefauxprod _n t^n.
\]
Then
\begin{equation}\label{eq:ono-C-PQ}
  C(t)=
\frac1{S(t)}=\frac{\auxftn (t)}{1-\auxprodftn(t)}.
\end{equation}
The first values of the auxiliary coefficients are
\[
\begin{array}{c|cccccccccc}
 n   &1&2&3&4&5&6&7&8&9&10\\ \hline
 \coefauxprod _n &1&0&3&4&10&14&21&23&28&34
\end{array}
.
\]
Thus it suffices to prove that all coefficients of $\auxprodftn(t)$ are nonnegative. Indeed, if $\auxprodftn(t)\in t\mathbb Z_{\ge0}[[t]]$, then formally
\[
  \frac1{1-\auxprodftn(t)}=\sum_{j\ge0}\auxprodftn (t)^j\in\mathbb Z_{\ge0}[[t]],
\]
and multiplication by $\auxftn (t)$, whose coefficients are all strictly positive, gives $c_m>0$ for every $m\ge0$.

\begin{theorem}\label{thm:ono-q-nonnegative}
For every $n\ge1$, one has $\coefauxprod _n\ge0$.
\end{theorem}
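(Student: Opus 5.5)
Writing $N=n+1$ and expanding $\auxftn(t)=1+2\sum_{k\ge1}t^k$, the coefficient of $t^n$ in $\auxftn(t)S(t)$ is $\sigma(N)(-1)^{n}+2\sum_{m=0}^{n-1}(-1)^m\sigma(m+1)$. Hence
\[
 \coefauxprod_n=\sum_{j=1}^{N}(-1)^{j}\,w_j\,\sigma(j),\qquad w_j=2\ (j<N),\quad w_N=1 .
\]
As a check, this gives $\coefauxprod_1=-2+4=\ldots$; more precisely, with $N=2$ it gives $-2\cdot1+3=1$, and with $N=3$ it gives $-2+6-4=0$, matching the table. The plan is to read $\sigma(j)=\sum_{d\mid j}d$ as counting \emph{row-marked rectangles}: a rectangle of width $d$ and height $k=j/d$, with one of its $d$ columns (or rows) marked. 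Each such object carries the sign $(-1)^{kd}$ and the weight $w_{kd}$, which amounts to two colours when $kd<N$ and one colour when $kd=N$. We then interchange the order of summation and group by the width $d$:
\[
 \coefauxprod_n=\sum_{d=1}^{N} d\,G_d,\qquad G_d:=\sum_{k=1}^{\lfloor N/d\rfloor}(-1)^{kd}w_{kd}.
\]

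For even $d$, every term of $G_d$ is positive, so $G_d=2\lfloor N/d\rfloor-[d\mid N]\ge 1$; these are the positive objects. For odd $d$, with $K=\lfloor N/d\rfloor$, the terms alternate, so $G_d\in\{0,-1,-2,+1\}$. The value is $0$ or positive when $K$ is even, possibly adjusted by the single-colour last term when $d\mid N$. The value is $-2$ (or $-1$ if $d\mid N$) when $K$ is odd. Thus the negative objects are exactly the marked rectangles of odd width $d$ with $\lfloor N/d\rfloor$ odd, and there are at most $2d$ of them for each such $d$. Within each odd $d$, the sign-reversing cancellation pairs height $2i-1$ with height $2i$ and is immediate. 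What remains is one leftover column of height $K$, in two colours, with $d$ marks.

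The main step is an explicit injection from these leftover negative objects into the positive objects of even width. For small odd $d$ with $K\ge3$, I would first try the map to width $2d$. Width $2d$ carries $2d\cdot G_{2d}\ge 2d\bigl(2\lfloor K/2\rfloor-1\bigr)\ge 2d$ positive objects, which is enough room for the at most $2d$ negative objects at width $d$. The hard case, and the obstacle I expect, is large odd $d\in(N/2,N]$, where $K=1$ and $2d>N$. These contribute a total of roughly $\tfrac38N^2$ negative objects, and they cannot use their doubles. For them I would instead inject into the rectangles of small even width $e$ with large height. A marked odd-width rectangle $(d,\text{mark }r,\text{colour})$ with $d>N/2$ would be sent to width $e=2$ and height $k$ determined by $(d,r)$ through a bijection of $\{(d,r)\}$ onto $\{(e,k,\text{mark})\}$ with $e$ even and $ek\le N$. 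The widths $e\le N/2$ that are not already used as targets $2d$ for odd $d\le N/4$ supply about $N^2/2$ objects. That is more than $\tfrac38N^2$ plus the remaining negative objects from odd $d\in(N/5,N/3]$ and so on, each of which is $O(N^2/k^2)$.

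To make the argument rigorous, I would order the negative objects by decreasing $d$ and assign them greedily to even-width targets that have not yet been used. I would prove that the greedy assignment never runs out by the counting inequality
\[
 \sum_{\substack{d\ \text{odd}\\ \lfloor N/d\rfloor\ \text{odd}}}2d\ \le\ \sum_{\substack{e\ \text{even}\\ e\le N}}e\,(2\lfloor N/e\rfloor-[e\mid N]),
\]
verified for all $N$ beyond an explicit threshold via $\sum_{e\ \text{even}}e\lfloor N/e\rfloor\ge N^2/2-O(N)$ and $\sum_{d\ \text{odd},\,N/(2i)<d\le N/(2i-1)}2d\le N^2\bigl((2i-1)^{-2}-(2i)^{-2}\bigr)+O(N)$. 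The second bound sums to at most $N^2(1-\log 2)+O(N)<N^2/2$. The finitely many $N$ below the threshold would be checked directly against the table of $\coefauxprod_n$, completing the proof that $\coefauxprod_n\ge0$.
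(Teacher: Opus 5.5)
Your reduction $q_n=\sum_{d\le N} d\,G_d$ is correct, but the argument breaks down after it.

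\textbf{A smaller error.} For odd $d$ one has $G_d\in\{0,-1,-2\}$, and $G_d=-1$ exactly when $d\mid N$, whatever the parity of $N/d$. So odd divisors $d$ of $N$ with $N/d$ even are negative and are missing from your list; for example $N=2$, $d=1$ gives $G_1=-2+1=-1$. The value $+1$ never occurs.

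\textbf{The main gap.} The greedy assignment is not an argument by itself. Any positive object may receive any negative one, so it succeeds exactly when the number of negative objects does not exceed the number of positive ones, that is, exactly when $q_n\ge 0$. Everything therefore rests on your counting inequality, and the asymptotics offered for it are wrong. First,
\[
\sum_{e\le N,\ 2\mid e} e\left\lfloor \frac{N}{e}\right\rfloor
=2\sum_{m\le \lfloor N/2\rfloor}\sigma(m)=\frac{\pi^2}{24}N^2+O(N\log N),
\]
which is \emph{smaller} than $N^2/2$. Hence your right-hand side is $\frac{\pi^2}{12}N^2+O(N\log N)$. Second, the main terms of your interval bounds sum to $N^2\sum_{k\ge1}(-1)^{k+1}k^{-2}=\frac{\pi^2}{12}N^2$, not to $(1-\log 2)N^2$. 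So your bound for the left side and the true size of the right side agree to leading order, and the inequality does not follow.

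The argument could be repaired. Your interval bound ignores that only odd $d$ are summed, and accounting for this halves it to $\frac{\pi^2}{24}N^2$. But you would then need explicit constants in the error terms, which are only $O(N\log N)$ after summing over all $i$, not $O(N)$. You would also need an explicit threshold and a machine check of every $N$ below it; the table covers only $N\le 11$. The result would be an analytic proof, essentially $\sum_{r\le x}(-1)^r\sigma(r)\sim\frac{\pi^2}{48}x^2$ with effective error, not an injection.

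\textbf{The missing idea.} Use a different pairing within each width: pair height $2i+1$ with height $2i$ instead of $2i-1$ with $2i$. In your notation $a=d$, $b=k$, this is the paper's Case~1, $(a,b,i,\varepsilon)\mapsto(a,b-1,i,\varepsilon)$ for odd $b\ge 3$. Then at \emph{every} odd width $d$ the unmatched negative objects are the height-one ones, whatever the parity of $\lfloor N/d\rfloor$. These form a chain indexed by $d$. For $d<N$ they inject locally into the height-one objects of the neighbouring even widths $d\pm1$:
\begin{itemize}
\item colour $0$ goes to width $d+1$ with the same mark;
\item colour $1$ with mark $r>1$ goes to width $d-1$, mark $r-1$, colour $1$;
\item colour $1$ with mark $1$ goes to width $d+1$, mark $d+1$, colour $0$.
\end{itemize}
At the boundary $d=N$ odd, only colour $0$ occurs. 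Mark $r>1$ goes to width $N-1$, mark $r-1$, colour $1$. Mark $1$ goes to the width-one, height-$(N-1)$ object of colour $1$, which is free because its would-be Case~1 preimage has area $N$ and hence no colour $1$. This is the paper's $\Phi_N$. It proves $q_n\ge 0$ uniformly in $N$, with no asymptotics and no finite verification.
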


For $m\ge1$, let $\rectangle _m$ be the set of triples
\[
  (a,b,i) \quad\text{with}\quad a,b\ge1,\quad ab=m,
  \quad 1\le i\le a.
\]
We view $(a,b,i)$ as an $a$-by-$b$ rectangle with its $i$th row marked. Hence
\begin{equation}\label{eq:ono-Rm-sigma}
  |\rectangle _m|=\sum_{a\mid m}a=\sigma(m),
\end{equation}
since choosing the height $a\mid m$ determines the width $b=m/a$, and then there are $a$ possible marked rows.

For $N\ge2$, define the colored finite set
\[
  X_N:=\{(a,b,i,\varepsilon): ab\le N,
      \ 1\le i\le a,
      \ \varepsilon\in \colorset _N(ab)\},
\]
where
\[
  \colorset _N(r):=
  \begin{cases}
    \{0,1\}, & r<N,\\
    \{0\},   & r=N.
  \end{cases}
\]
Thus rectangles of area $<N$ appear in two colors, whereas boundary rectangles of area $N$ appear only in color $0$. Let $X_N^+$ be the subset of objects of even area and $X_N^-$ the subset of objects of odd area.

\begin{lemma}\label{lem:ono-signedcount}
For $n\ge1$, with $N=n+1$,
\[
  \coefauxprod _n=|X_N^+|-|X_N^-|.
\]
\end{lemma}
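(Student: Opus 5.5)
The proof will be a direct coefficient comparison: compute $\coefauxprod _n$ from the definition $\auxprodftn (t)=1-\auxftn (t)S(t)$, and then read the resulting formula as a signed count over $X_N$ using \eqref{eq:ono-Rm-sigma}.

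\emph{Step 1: compute $\coefauxprod _n$.} Write $\auxftn (t)=\sum_{j\ge0}p_jt^j$ with $p_0=1$ and $p_j=2$ for $j\ge1$. Write $S(t)=\sum_{k\ge0}s_kt^k$ with $s_k=(-1)^k\sigma(k+1)$. For $n\ge1$ the constant term $1$ of $1-\auxftn S$ does not contribute, so
\[
\coefauxprod _n=-\sum_{k=0}^{n}p_{n-k}s_k=-(-1)^n\sigma(n+1)-2\sum_{k=0}^{n-1}(-1)^k\sigma(k+1).
\]
Put $N=n+1$ and substitute $r=k+1$. Since $-(-1)^{n}=(-1)^{N}$ and $-(-1)^{k}=(-1)^{r}$, this becomes
\[
\coefauxprod _n=(-1)^N\sigma(N)+2\sum_{r=1}^{N-1}(-1)^r\sigma(r)=\sum_{r=1}^{N}(-1)^r\,\bigl|\colorset _N(r)\bigr|\,\sigma(r).
\]
The last equality holds because $|\colorset _N(r)|=2$ for $r<N$ and $|\colorset _N(N)|=1$.

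\emph{Step 2: interpret the sum as a signed count.} Split $X_N$ according to the area $r=ab\in\{1,\dots,N\}$. The objects of area $r$ are pairs consisting of an element $(a,b,i)\in\rectangle _r$ and a color $\varepsilon\in\colorset _N(r)$. By \eqref{eq:ono-Rm-sigma} there are exactly $\sigma(r)\,|\colorset _N(r)|$ of them. These objects lie in $X_N^+$ when $r$ is even and in $X_N^-$ when $r$ is odd, so they contribute $(-1)^r\sigma(r)|\colorset _N(r)|$ to $|X_N^+|-|X_N^-|$. Summing over $r$ recovers the expression from Step 1, which proves the lemma.

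\emph{Expected difficulty.} No step is a real obstacle. The only care needed is bookkeeping: the sign shift from $(-1)^k$ to $(-1)^r$ under $r=k+1$, and the fact that the boundary term $p_0=1$ (as opposed to $p_j=2$) is exactly what matches the single color allowed at area $N$. As a sanity check, the formula gives $\coefauxprod _1=\sigma(2)-2\sigma(1)=1$ and $\coefauxprod _2=-\sigma(3)+2(-\sigma(1)+\sigma(2))=0$, in agreement with the table.
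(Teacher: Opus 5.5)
Your proposal is correct and is essentially the paper's proof. You compute $[t^n]\auxftn(t)S(t)$ using $p_0=1$ and $p_j=2$, then shift to $N=n+1$ and $r=k+1$ to get $(-1)^N\sigma(N)+2\sum_{r=1}^{N-1}(-1)^r\sigma(r)$, and finally read this as the signed count over $X_N$ via \eqref{eq:ono-Rm-sigma}; grouping by area with the explicit factor $|\colorset_N(r)|$ just spells out the paper's final sentence a little more precisely.
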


\begin{proof}
Since $\auxftn (t)=1+2t+2t^2+\cdots$, we have
\[
  [t^n]\auxftn (t)S(t)
  =(-1)^n\sigma(n+1)+2\sum_{m=0}^{n-1}(-1)^m\sigma(m+1).
\]
Because $\auxprodftn (t)=1-\auxftn (t)S(t)$, for $n\ge1$ we obtain, after putting $N=n+1$ and replacing $m+1$ by $r$,
\begin{equation}\label{eq:ono-q-signed}
  \coefauxprod _n= -[t^n]\auxftn (t)S(t)
     =(-1)^N\sigma(N)+2\sum_{r=1}^{N-1}(-1)^r\sigma(r).
\end{equation}
By \eqref{eq:ono-Rm-sigma}, $\sigma(r)$ counts row-marked rectangles of area $r$. In \eqref{eq:ono-q-signed}, each area $r<N$ occurs with two colors, while the boundary area $N$ occurs with one color. The sign is $+1$ for even area and $-1$ for odd area. This is exactly $|X_N^+|-|X_N^-|$.
\end{proof}

\begin{lemma}\label{lem:ono-injection}
For every $N\ge2$, there is an explicit injection
\[
  \Phi_N:X_N^-\hookrightarrow X_N^+.
\]
Consequently, $\coefauxprod _n\ge0$ for all $n\ge1$.
\end{lemma}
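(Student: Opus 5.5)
The plan is to build $\Phi_N$ by modifying the shape of the rectangle and to handle the boundary cases separately. An element of $X_N^-$ is $(a,b,i,\varepsilon)$ with $ab$ odd, so both $a$ and $b$ are odd. The guiding principle is to \emph{shrink} the area, never enlarge it. Then the image has area $<N$, both colours are available there, and the constraint $ab\le N$ and the colour restriction at area $N$ cannot be violated.

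\textbf{Generic case $b\ge3$.} Map $(a,b,i,\varepsilon)\mapsto(a,b-1,i,\varepsilon)$. The image has odd height $a$, even width $b-1\ge2$, hence even area $a(b-1)<ab\le N$. The same marked row $i\le a$ remains admissible, and $\varepsilon$ is admissible because the new area is $<N$. This map is injective, since $b$ is recovered as the width plus one. Its image consists exactly of the objects of odd height and even width.

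\textbf{Thin case $b=1$, $a\ge3$.} Here the object is an $a\times1$ column with $a$ odd, and the targets must have \emph{even height} so as to be disjoint from the generic image. For $1\le i\le a-1$ send $(a,1,i,\varepsilon)\mapsto(a-1,1,i,\varepsilon)$; this has even area $a-1<N$. The leftover objects with $i=a$ (two colours) will be sent to even-height rectangles of a different width, for instance $(2,(a-1)/2,j,\varepsilon')$ with $j\in\{1,2\}$. These have area $a-1$ and height $2$, so they are disjoint from the generic image, and they are distinct from the $(a-1)\times1$ columns as soon as $a-1>2$.

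\textbf{Main obstacle.} The hard step is the bookkeeping for the smallest shapes, namely $a\in\{1,3\}$ with $b=1$. In these cases the natural targets $(2,1,\cdot,\cdot)$ and $(1,2,\cdot,\cdot)$ are already used, so one needs either a small ad hoc table of assignments or an extra even object of area $2$ or $4$. Such an object is available once $N\ge4$, and $N=2,3$ are checked by hand: $q_1=1$ and $q_2=0$. I would first try to route these exceptional objects into the unused even-height, even-width rectangles, e.g.\ $(2,2,\cdot,\cdot)$. Throughout I would verify injectivity by the invariants (parity of height, parity of width, area) that separate the three cases.

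\textbf{Fallback.} If the explicit map becomes too messy, I would instead prove the equivalent inequality $|X_N^-|\le|X_N^+|$ directly. By \eqref{eq:ono-q-signed} this amounts to a comparison of signed partial sums of $\sigma$. One can deduce it from $\sigma(2r)\ge\sigma(2r-1)+\ldots$ type estimates, for instance $\sigma(2m)\ge 3\sigma(m)$ applied on the odd $m$, together with the tabulated values of $q_n$ for small $n$. The result is an injection, by counting, that can then be made explicit.
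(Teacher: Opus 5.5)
\textbf{There is a genuine gap: the proposal never produces the injection for the smallest shapes, which is where the lemma's content lies.}

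Your generic case is the paper's Case 1, and your leftover rule for columns with $a\ge5$ is consistent. The failure is at $(1,1,1,\varepsilon)$ and $(3,1,3,\varepsilon)$:
\begin{itemize}
\item The $1\times1$ square $(1,1,1,\varepsilon)$ falls under neither of your cases.
\item $(3,1,3,\varepsilon)$ has no target, because for $N\ge4$ all six area-$2$ objects are already used: $(3,1,i,\varepsilon)\mapsto(2,1,i,\varepsilon)$ for $i\le2$, and $(1,3,1,\varepsilon)\mapsto(1,2,1,\varepsilon)$.
\end{itemize}
So for $N\ge4$ you need four distinct unused even targets, and your suggestions do not supply them. ``An extra even object of area $2$ or $4$'' gives one object, not four. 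The set $(2,2,\cdot,\cdot)$ never suffices on its own:
\begin{itemize}
\item for $N=4$ only its two colour-$0$ elements are admissible;
\item for $N=5$ one of its four elements is taken by $(5,1,5,0)$;
\item for $N\ge6$ two are taken by your own rule $(5,1,5,\varepsilon)\mapsto(2,2,j,\varepsilon')$.
\end{itemize}

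The root cause is your shrink-only principle, since a $1\times1$ square cannot shrink. The paper lets columns grow as well. For $b=1$ and $a<N$ it uses
\begin{itemize}
\item $(a,1,i,0)\mapsto(a+1,1,i,0)$, which is legal because colour $0$ is allowed up to and including area $N$ and $a+1\le N$;
\item $(a,1,1,1)\mapsto(a+1,1,a+1,0)$;
\item $(a,1,i,1)\mapsto(a-1,1,i-1,1)$ for $i>1$.
\end{itemize}
The boundary column goes to $(N,1,i,0)\mapsto(N-1,1,i-1,1)$ for $i>1$ and $(N,1,1,0)\mapsto(1,N-1,1,1)$. The last target is free precisely because $(1,N,1,1)$ is excluded by the boundary colour rule. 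Even-height columns are then hit in a uniquely invertible way, read off from colour and marked row, with no small-$N$ case analysis.

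Your scheme can be repaired, but you would have to do it. Fix $j=1$ in the leftover rule and send $(1,1,1,\varepsilon)\mapsto(2,2,2,\varepsilon)$ and $(3,1,3,\varepsilon)\mapsto(2,3,2,\varepsilon)$. This is admissible and injective for $N\ge7$, and $N\le6$ needs an explicit table.

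Your fallback does not close the gap either. The inequality $|X_N^-|\le|X_N^+|$ is literally $q_n\ge0$, so it is the whole content. The identity $\sigma(2m)=3\sigma(m)$ for odd $m$ only matches even $r<N$ against $r/2<N/2$, leaving the odd $r\in(N/2,N)$ uncompensated. Making this a proof would need explicit quantitative divisor-sum estimates plus a finite check, none of which you give. A counting argument would also not yield the \emph{explicit} injection the lemma asserts.
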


\begin{proof}
Let $(a,b,i,\varepsilon)\in X_N^-$. Then $ab$ is odd, so both $a$ and $b$ are odd. We define $\Phi_N$ by the following three rules.

\emph{Case 1: $b>1$.} Define
\[
  \Phi_N(a,b,i,\varepsilon):=(a,b-1,i,\varepsilon).
\]
Since $b-1$ is positive and even, the new area $a(b-1)$ is even. Moreover $a(b-1)<ab\le N$, so both colors are allowed.

\emph{Case 2: $b=1$ and $a<N$.} Here the object is an odd column below the boundary, so both colors are allowed. Define
\[
\begin{aligned}
  \Phi_N(a,1,i,0)&:=(a+1,1,i,0),\\
  \Phi_N(a,1,1,1)&:=(a+1,1,a+1,0),\\
  \Phi_N(a,1,i,1)&:=(a-1,1,i-1,1) \qquad (i>1).
\end{aligned}
\]
Each image is a valid positive column: its height is even, the marked row is valid, and in the last line the area is $<N$, so color $1$ is permitted.

\emph{Case 3: $b=1$ and $a=N$.} This can occur only when $N$ is odd, and then the boundary rule forces $\varepsilon=0$. Define
\[
\begin{aligned}
  \Phi_N(N,1,i,0)&:=(N-1,1,i-1,1) \qquad (i>1),\\
  \Phi_N(N,1,1,0)&:=(1,N-1,1,1).
\end{aligned}
\]
Both images have even area $N-1<N$, so color $1$ is permitted.

It remains to check injectivity. Images from Case 1 are recovered uniquely by increasing the width by $1$. The exceptional image $(1,N-1,1,1)$ from Case 3 cannot be a Case 1 image, since its only possible Case 1 preimage would be $(1,N,1,1)$, which is forbidden by the boundary color rule.

For column images, the inverse is also unique. A color $0$ column of even height $e$ arises either from $(e-1,1,i,0)$ if its marked row is $i<e$, or from $(e-1,1,1,1)$ if the marked row is $e$. A color $1$ column of even height $e<N-1$ arises uniquely from $(e+1,1,i+1,1)$. A color $1$ column of height $N-1$ arises from the boundary rule $(N,1,i+1,0)$, except for the already separated exceptional image $(1,N-1,1,1)$. These alternatives are disjoint. Hence no two negative objects have the same image.

Thus $\Phi_N$ is injective. By Lemma~\ref{lem:ono-signedcount},
\[
  \coefauxprod _n=|X_{n+1}^+|-|X_{n+1}^-|\ge0,
\]
and Theorem~\ref{thm:ono-q-nonnegative} follows.
\end{proof}

\begin{corollary}\label{cor:ono-c-positive}
For every $m\ge0$, the coefficient $c_m$ of $C(t)=
1/S(t)$ is strictly positive.
\end{corollary}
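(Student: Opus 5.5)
The statement to prove is Corollary~\ref{cor:ono-c-positive}: $c_m>0$ for all $m\ge0$. I would derive it from the factorization \eqref{eq:ono-C-PQ} together with Theorem~\ref{thm:ono-q-nonnegative}, which is established by Lemmas~\ref{lem:ono-signedcount} and \ref{lem:ono-injection}.

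First, I would verify \eqref{eq:ono-C-PQ} as an identity of formal power series. Since $S(0)=1$ and $\auxftn(0)=1$, we have $\auxprodftn(0)=1-\auxftn(0)S(0)=0$. Hence $\auxprodftn(t)\in t\mathbb{Z}[[t]]$, and $1-\auxprodftn(t)=\auxftn(t)S(t)$ is invertible. Dividing both sides by $S(t)(1-\auxprodftn(t))$ gives $1/S(t)=\auxftn(t)/(1-\auxprodftn(t))$.

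Second, by Theorem~\ref{thm:ono-q-nonnegative}, all $\coefauxprod_n\ge0$. Since $\auxprodftn$ has zero constant term, the geometric series $\sum_{j\ge0}\auxprodftn(t)^j$ converges $t$-adically, because only $j\le m$ contributes to $[t^m]$. Each power $\auxprodftn(t)^j$ has nonnegative coefficients, so $G(t):=1/(1-\auxprodftn(t))=\sum_m g_m t^m$ satisfies $g_m\ge0$ and $g_0=1$.

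Third, write $\auxftn(t)=\sum_k p_k t^k$ with $p_0=1$ and $p_k=2$ for $k\ge1$. Then
\[
c_m=\sum_{k=0}^m p_{m-k}\,g_k \ \ge\ p_m g_0 = p_m \ \ge 1>0,
\]
since all summands are nonnegative. This proves the corollary.

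The only substantive ingredient is the nonnegativity of the $\coefauxprod_n$, that is, the injectivity of $\Phi_N$ in Lemma~\ref{lem:ono-injection}. I take that as given from the earlier statement. If I had to recheck it, the main work would be the case bookkeeping: distinguishing color-$0$ and color-$1$ columns of even height, and separating the single exceptional image $(1,N-1,1,1)$.
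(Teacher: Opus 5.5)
Your proof is correct and is essentially the paper's argument: you combine the identity $C(t)=\auxftn(t)/(1-\auxprodftn(t))$ with $\coefauxprod_n\ge 0$ from Theorem~\ref{thm:ono-q-nonnegative}, expand the geometric series, and multiply by the positive series $\auxftn(t)$. Your extra steps, namely checking $\auxprodftn(0)=0$ so the identity holds formally and the explicit bound $c_m\ge [t^m]\auxftn(t)\ge 1$, only spell out what the paper leaves implicit.
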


\begin{proof}
By Theorem~\ref{thm:ono-q-nonnegative}, $\auxprodftn (t)\in t\mathbb Z_{\ge0}[[t]]$. Hence
\[
  \frac1{1-\auxprodftn (t)}=1+\auxprodftn (t)+\auxprodftn (t)^2+\cdots
\]
has nonnegative coefficients. Since $\auxftn (t)=1+2t+2t^2+\cdots$ has strictly positive coefficients, \eqref{eq:ono-C-PQ} implies $c_m>0$ for all $m\ge0$.
This gives an independent combinatorial proof of Proposition~\ref{prop:pos}.
\end{proof}

\begin{remark*}
The final step is the standard product--sequence principle for ordinary generating functions: $\auxftn (t)/(1-\auxprodftn (t))$ counts one $\auxftn $-object followed by a finite ordered word of $\auxprodftn $-objects. In the terminology of Flajolet--Sedgewick, this is the symbolic translation of $\mathcal P\times\operatorname{SEQ}(\mathcal Q)$; see \cite[Chapter~I]{FS09}.
\end{remark*}
%%%%%%%%%%%%%%%%%%%%%%%%%%%%%%%%%%%%%%%%%%%%%%%%%%%%%%%%%%%%%%%%%%%
%%%%%%%%%%%%%%%%%%%%%%%%%%%%%%%%%%%%%%%%%%%%%%%%%%%%%%%%%%%%%%%%%%%
\subsection{Lean verification}\label{sec:AxiomProver}

Here we provide the context for the proof of Challenge~3 and record the protocol by which the
structural positivity argument was discovered and checked in Lean by AxiomProver, an AI system that is currently under development.  This Appendix is motivated, in
part, by the broader question of whether an AI system can help find a conceptual explanation for
a positivity phenomenon that is already accessible by analytic or computational means as obtained in this paper.  In the
present setting, the relevant positivity is the assertion that the coefficients $c_m$ defined by
\[
C(t):=  \sum_{m\geq 0} c_m t^m
  =
  \left(\sum_{m\geq 0}\sigma(m+1)(-t)^m\right)^{-1}
\]
are positive for every $m\geq 0$, where
\[
  S(t):=\sum_{m\ge 0}\sigma(m+1)(-t)^m
       =1-3t+4t^2-7t^3+6t^4-12t^5+\cdots .
\]

The proof was developed through human-AI collaboration. The human author recognized that the key is to study the coefficients of
the remainder 
$$Q(t)=1-P(t)S(t)=:\sum_{n\geq 0} q_n t^n.
$$
The role of $P(t)$ is to make the signed
coefficient formula for $Q(t)$ amenable to a finite cancellation model: after the change of
variables $N=n+1$, the coefficient $q_n$ becomes a signed count of row-marked rectangles of
area at most $N$, with a boundary color convention at area $N$. This is Lemma~B.2, which was autonomously proved and verified in Lean by AxiomProver.

The main search problem was then reduced to finding a uniform injection from the negative
objects, corresponding to odd-area rectangles, into the positive objects, corresponding to even-area
rectangles.  The resulting injection is the content of Lemma~B.3 which was autonomously proved and verified in Lean by AxiomProver.

\subsection*{Process}
The formal proofs provided in this work were developed and verified using Lean~4.28.0.
Compatibility with earlier or later versions is not guaranteed due to the
evolving nature of the Lean 4 compiler and its core libraries.
The relevant files are all posted in the following repository:
\begin{center}
  \url{https://github.com/AxiomMath/challenge_3}
\end{center}
The formalization consisted of two separate runs, named
\texttt{lemma-b2} and \texttt{lemma-b3}\footnote{These statements are referred to as Propositions 1 and 2 in GitHub.}.  The run \texttt{lemma-b2} formalizes
Lemma B.2; the run \texttt{lemma-b3} formalizes Lemma B.3.
For each run, the input files, collected under \texttt{input/<run>/}, were the following.
\begin{itemize}
\item \texttt{problem.md}: a natural-language description of the problem to be formalized.
\item \texttt{challenge3\_Part1.tex} or \texttt{challenge3\_Part2.tex}: the \LaTeX{} statement and proof that the formalization was asked to follow.
\end{itemize}
For the run \texttt{lemma-b3}, the \texttt{problem.md} additionally supplied the
formalizations of Lemma B.2 and Lemma B.3 from the \texttt{lemma-b2} run.
Given these files,
AxiomProver produced, for each run, the following output files, collected under \texttt{Challenge\_3/<run>/}.
\begin{itemize}
 \item \texttt{problem.lean}: a translation of the problem statement into Lean.
 \item \texttt{solution.lean}: the formal, machine-checked solution in Lean.
\end{itemize}
For both runs \texttt{lemma-b2} and \texttt{lemma-b3},
both files were generated autonomously by AxiomProver.

After the formal solutions were generated, the human authors wrote this paper
(without the use of AI) for human readers.
At first glance, the proofs found by AxiomProver may not resemble the narrative presented in this paper.
Turning a Lean file into a human-readable proof is difficult
because Lean is written as code for a type-checker.

\section*{Declaration of AI-assistance in the manuscript preparation process}
As described in Appendix B,
AxiomProver (an AI tool under development)
was used to produce formal proofs of Lemma B.2 and Lemma B.3.
The natural language text of the paper, including the appendices, was written without assistance from AI.
\ \\

%%%%%%%%%%%%%%%%%%%%%%%%%%%%%%%%%%%%%%%%%%%%%%%%%%%%%%%%%%%%%%%%%%

\noindent
{\bf Acknowledgments.}
Bernhard Heim and Markus Neuhauser thank Christian Stump for a useful discussion on the zero distribution of polynomials. Ken Ono thanks Simon Mahns and Jujian Zhang for their assistance with Appendix B.

\clearpage
\noindent\textsc{Appendix B author address}

\medskip
\noindent
\textsc{Ken Ono}\\
Axiom Math\\
124 University Avenue\\
Palo Alto, CA 94301, USA\\
\emph{Email address:} \texttt{ken@axiommath.ai}

\end{document}